\def\thefigure{\thesection.\@arabic\c@figure}
\def\fps@figure{h,t}
\def\thetable{\thesection.\@arabic\c@table}
\def\fps@table{h, t}
\newcommand\@dotsep{4.5}
\def\@tocline#1#2#3#4#5#6#7{\relax
	\ifnum #1>\c@tocdepth 
	\else
	\par \addpenalty\@secpenalty\addvspace{#2}%
	\begingroup \hyphenpenalty\@M
	\@ifempty{#4}{%
		\@tempdima\csname r@tocindent\number#1\endcsname\relax
	}{%
		\@tempdima#4\relax
	}%
	\parindent\z@ \leftskip#3\relax \advance\leftskip\@tempdima\relax
	\rightskip\@pnumwidth plus1em \parfillskip-\@pnumwidth
	#5\leavevmode\hskip-\@tempdima #6\relax
	\leaders\hbox{$\m@th
		\mkern \@dotsep mu\hbox{.}\mkern \@dotsep mu$}\hfill
	\hbox to\@pnumwidth{\@tocpagenum{#7}}\par
	\nobreak
	\endgroup
	\fi}
\newcommand{\bfi}{\bfseries\itshape}
\newtheorem{theorem}{Theorem}
\newtheorem{corollary}[theorem]{Corollary}
\newtheorem{definition}[theorem]{Definition}
\newtheorem{example}[theorem]{Example}
\newtheorem{lemma}[theorem]{Lemma}
\newtheorem{notation}[theorem]{Notation}
\newtheorem{proposition}[theorem]{Proposition}
\newtheorem{remark}[theorem]{Remark}
\numberwithin{theorem}{section}
\numberwithin{equation}{section}
\renewcommand{\1}{{\bf 1}}
\newcommand{\A}{\mathfrak{A}}
\newcommand{\I}{\mathfrak{I}}
\newcommand{\J}{\mathfrak{J}}
\renewcommand{\P}{\mathfrak{P}}
\newcommand{\Q}{\mathfrak{Q}}
\newcommand{\Ccb}{{\mathcal C}^b}
\newcommand{\Cl}{{\rm Cl}}
\newcommand{\de}{{\rm d}}
\newcommand{\ee}{{\rm e}}
\newcommand{\ev}{{\rm ev}}
\newcommand{\Hom}{{\rm Hom}}
\newcommand{\ie}{{\rm i}}
\newcommand{\Ind}{{\rm Ind}}
\newcommand{\Ker}{{\rm Ker}\,}
\newcommand{\Prim}{{\rm Prim}}
\newcommand{\CC}{{\mathbb C}}
\newcommand{\NN}{{\mathbb N}}
\newcommand{\QQ}{{\mathbb Q}}
\newcommand{\RR}{{\mathbb R}}
\newcommand{\TT}{{\mathbb T}}
\newcommand{\ZZ}{{\mathbb Z}}
\newcommand{\Bc}{{\mathcal B}}
\newcommand{\Cc}{{\mathcal C}}
\newcommand{\Hc}{{\mathcal H}}
\newcommand{\Jc}{{\mathcal J}}
\newcommand{\Kc}{{\mathcal K}}
\newcommand{\Lc}{{\mathcal L}}
\renewcommand{\Mc}{{\mathcal M}}
\newcommand{\Vc}{{\mathcal V}}
\newcommand{\Cg}{{\mathfrak C}}
\newcommand{\Jg}{{\mathfrak J}}
\newcommand{\Glimm}{{\rm Glimm}}
\newcommand{\hull}{{\rm hull}}
\newcommand{\Id}{{\rm Id}}
\newcommand{\MinPrimal}{{\rm MinPrimal}}
\newcommand{\Orc}{{\rm Orc}}
\newcommand{\Primal}{{\rm Primal}}
\newcommand{\Sub}{{\rm Sub}}
\title[Ideal spaces of Mautner groups]{A note on 
	ideal spaces of Mautner groups}
\author{Ingrid Belti\c t\u a}
\author{Daniel Belti\c t\u a}
\address{Institute of Mathematics ``Simion Stoilow'' of the Romanian Academy,
P.O. Box 1-764, Bucharest, Romania}
\email{Ingrid.Beltita@imar.ro, ingrid.beltita@gmail.com}
\email{Daniel.Beltita@imar.ro, beltita@gmail.com}
\keywords{solvable Lie group; quasi-standard $C^*$-algebra; primal ideal}
\subjclass[2020]{Primary 22E27; Secondary 17B30, 46L05, 46L55}
\begin{document}

\begin{abstract}
The Mautner groups are the 5-dimensional solvable Lie groups that have non-type-I factor representations. 
We show that their corresponding group $C^*$-algebras are quasi-standard 
and we describe the topology of their spaces of minimal primal ideals and Glimm ideals. 
\end{abstract}

\maketitle


\section{Introduction}

Several deep properties of the $C^*$-algebras of solvable Lie groups 
are encoded in the topology of their primitive ideal spaces, 
which, in turn, can be sometimes read off the Lie algebra of the group 
under consideration. 
This idea is particularly well illustrated by 
the class of nilpotent Lie groups, whose primitive ideal space is homeomorphic to the space of coadjoint orbits via the Kirillov correspondence. 
Beyond that class the so-called method of coadjoint orbits 
is much more difficult to use, and therefore 
there arises the challenging problem of replacing it by 
alternative approaches, involving transformation groups 
which are less general than the coadjoint action and yet, 
closer related to certain classes of Lie groups under consideration. 

This paper belongs to the line of research sketched above 
 (cf.  also \cite{BB18a}, \cite{BB18b}, \cite{BB21a}, \cite{BB21b}, and \cite{BB21c}).  
We regard the topology of the primitive ideal spaces of non-type-I solvable Lie groups from the perspective of the quasi-standard $C^*$-algebras that were introduced in \cite{AS90} and are related to quite subtle topological aspects of several ideal spaces of $C^*$-algebras. 
Minimal primal and Glimm ideal spaces of the $C^*$-algebras of generalized $(ax+b)$-groups defined by hyperbolic matrices were earlier computed in \cite[\S 5]{KST95}. 
In these notes we study these ideal spaces for another type of  generalized $(ax+b)$-groups, namely, in the case of elliptic matrices (having purely imaginary eigenvalues). 

For simplicity we focus on the classical Mautner groups, which are the 5-dim\-ensional solvable Lie groups defined as follows. 
For any $\theta\in\RR$ we denote by $G_\theta=\CC^2\rtimes_\theta\RR$ its corresponding Mautner group 
whose group operation is given 
in terms of the matrix 
\begin{equation}
\label{Dtheta}
D_\theta:=\begin{pmatrix}
2\pi\ie & \hfill 0 \\
\hfill 0 & 2\pi\ie\theta
\end{pmatrix}\in M_2(\CC)
\end{equation}
by the formula 
$$(z_1,t_1)\cdot(z_2,t_2)=(z_1+\ee^{t_1 D_\theta}z_2,t_1+t_2)$$
for all $z_1,z_2\in\CC^2$ and $t_1,t_2\in\RR$. 

Unless otherwise mentioned we assume $\theta\in\RR\setminus\QQ$ 
and we define 
\begin{equation}\label{alphatheta}
\alpha_\theta\colon \RR\times\CC^2\to\CC^2,\quad 
\alpha_\theta(t,z):=\alpha_\theta^t(z):=\ee^{tD_{\theta}}z
\end{equation}
where $D_\theta\in M_2(\CC)$ is given by \eqref{Dtheta}. 
When no ambiguity is possible, we omit $\theta$ from the notation and we write simply $\alpha$ and $\alpha^t$ instead of $\alpha_\theta$ and $\alpha_\theta^t$, respectively. 

It is known that the group $C^*$-algebra $C^*(G_\theta)$ is antiliminary, cf. \cite[Ex. 5.7]{BB21b}. 
In the present paper we obtain a precise description of the primitive ideal space of the group $C^*$-algebra  $C^*(G_\theta)$ (Theorem~\ref{closed}) and then, 
among other things, we show that this is a quasi-standard $C^*$-algebra and its spaces of minimal primal ideals and Glimm ideals are homeomorphic to the closed quadrant of the plane $[0,\infty)^2$ 
(Corollary~\ref{M}). 
We hope that this last result may shed some light on the problem of determining the possible Glimm spaces for particular classes of $C^*$-algebras, 
which was mentioned in \cite[Rem. 6.7]{LS22}.

\section{Miscellaneous preliminaries}

\subsection*{General topology}

\begin{notation}\label{ws}
	\normalfont
	For an arbitrary topological space $Y$ we denote by $\Ccb(Y)$ the set of all bounded continuous functions $f\colon Y\to\RR$. 
	We also denote by $\Cl(Y)$ the set of all closed subsets of $Y$ with its topologies $\tau_w\subseteq\tau_s$, and  we recall the following notation, cf. \cite[p. 147]{LS10}: 
	\begin{itemize}
		\item $\Lc(Y)$ the set of $L\in\Cl(Y)$ for which there exists a net in $Y$ whose set of $\tau_w$-limit points is exactly~$L$; 
		\item $\Lc'(Y):=\Lc(Y)\setminus\{\emptyset\}$; 
		\item $\Mc\Lc(Y)$ the set of all elements of $\Lc(Y)$ which are maximal with respect to the ordering given by set inclusion, in particular $\Mc\Lc(Y)\subseteq\Lc'(Y)$; 
		\item $\Mc\Lc^s(Y):=\overline{\Mc\Lc(Y)}^{\tau_s}\cap\Lc'(Y)$.
	\end{itemize}
	Unless otherwise mentioned, $\Cl(Y)$ is endowed with its Fell  topology~$\tau_s$,  which makes $\Cl(Y)$ a compact Hausdorff space 
	(cf., e.g., \cite[App. H]{Wi07}). 
\end{notation}

\begin{definition}\label{deflim}
	\normalfont 
	Let $\{A_n\}_{n\in\NN}$ be a sequence of subsets of a topological space $X$. 
	
	We define $\liminf\limits_{n\in\NN}A_n$\index{liminfA@$\liminf\limits_{n\in\NN}A_n$} as the set of all 
	points $x\in X$ with the following property: 
	For every $n\in\NN$ there exists $x_n\in A_n$ such that for 
	every $V\in\Vc_X(x)$ there exists $n_V\in\NN$ with $\{x_n\mid n\ge n_V\}\subseteq V$.
	
	We define $\limsup\limits_{n\in\NN}A_n$\index{limsupA@$\limsup\limits_{n\in\NN}A_n$} as the set of all 
	points $x\in X$ for which there exist infinitely many positive integers $1\le n_1<n_2<\cdots$ for which $x\in\liminf\limits_{k\in\NN}A_{n_k}$. 
\end{definition}

\begin{lemma}\label{L2}
	If $X$ is a topological space and $\{A_n\}_{n\in\NN}$ is a sequence of subsets of~$X$, then both $\liminf\limits_{n\to\infty}A_n$ and  $\limsup\limits_{n\to\infty}A_n$ are closed subsets of $X$. 
\end{lemma}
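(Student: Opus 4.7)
My plan is to identify the sequential $\liminf$ and $\limsup$ with their manifestly closed topological counterparts, namely
\begin{equation*}
\liminf_{n\in\NN}A_n=\bigl\{x\in X:\forall V\in\Vc_X(x),\ \exists n_V\in\NN,\ \forall n\ge n_V,\ A_n\cap V\ne\emptyset\bigr\}
\end{equation*}
and
\begin{equation*}
\limsup_{n\in\NN}A_n=\bigcap_{N\ge 1}\overline{\bigcup_{n\ge N}A_n},
\end{equation*}
from which the closedness statements are immediate. (If some $A_n$ is empty, the sequential $\liminf$ is forced to be empty, hence trivially closed, so I may assume throughout that each $A_n$ is nonempty.) The right-hand set for $\liminf$ is closed because, if $x$ fails its defining condition, any open $V\ni x$ witnessing the failure does so for every $y\in V$ as well; the right-hand set for $\limsup$ is a countable intersection of closed sets.

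To establish the set-equalities, the inclusions ``$\subseteq$'' are essentially definitional: a sequence $x_n\in A_n$ with $x_n\to x$ forces every open $V\ni x$ to contain $x_n$, hence to meet $A_n$, from some index on, and a subsequence argument yields the corresponding statement for $\limsup$. For the reverse inclusions I would fix a countable base $V_1\supseteq V_2\supseteq\cdots$ of open neighborhoods at $x$ and diagonalize. In the $\liminf$ case, choose integers $N_1<N_2<\cdots$ with $A_n\cap V_k\ne\emptyset$ for every $n\ge N_k$, and pick $x_n\in A_n\cap V_k$ whenever $N_k\le n<N_{k+1}$ (and any $x_n\in A_n$ for $n<N_1$); in the $\limsup$ case, extract $n_1<n_2<\cdots$ with $A_{n_k}\cap V_k\ne\emptyset$ and pick $y_k\in A_{n_k}\cap V_k$. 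A short verification shows that the constructed sequences converge to $x$.

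The main obstacle is this diagonal construction, which tacitly uses a countable neighborhood base at $x$, i.e., first countability at $x$. The lemma is phrased for an arbitrary topological space, but in the applications relevant to this paper $X$ is the Fell topology on the ideal space of a separable $C^*$-algebra, hence second countable, so the diagonal argument applies without modification; alternatively, one could argue directly that every net in $\liminf_n A_n$ converging to $x$ still permits such a diagonal extraction once one restricts to a cofinal sequence among the neighborhoods of $x$.
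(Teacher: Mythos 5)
The paper does not actually prove this lemma --- it simply cites \cite[Cor. 1, p. 121]{Br97} --- so your argument is necessarily a different route, and it is a sound one: reducing closedness to the two ``manifestly closed'' descriptions (the neighborhood-based lower limit, and $\bigcap_{N}\overline{\bigcup_{n\ge N}A_n}$ for the upper limit) is exactly the right idea, and your diagonal extractions correctly establish the reverse inclusions when $x$ has a countable neighborhood base. Moreover, the caveat you raise is not a cosmetic one: with the sequential definitions adopted in Definition~\ref{deflim}, the lemma is actually \emph{false} for arbitrary topological spaces. For instance, in $X=[0,\omega_1]$ with the order topology, taking $A_n=[0,\omega_1)$ for all $n$ gives $\liminf_n A_n=[0,\omega_1)$ (no sequence of countable ordinals converges to $\omega_1$), which is not closed; the cited result in Berge concerns the neighborhood-based limits, which coincide with the sequential ones only under first countability at the point in question. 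Your observation that this costs nothing in the present paper is also correct: Lemma~\ref{T1}, the only place Lemma~\ref{L2} is invoked, explicitly assumes $X$ is first countable, and the other spaces where limit sets are used (Fell hyperspaces and open quotients of second countable spaces) are second countable. So your proof is correct precisely in the generality in which the lemma is applied, and it in fact exposes that the hypothesis ``topological space'' in the statement should be ``first countable topological space'' (or the definitions should be replaced by their neighborhood-based versions, for which closedness is immediate as you note).
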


\begin{proof}
	See \cite[Cor. 1, page 121]{Br97}. 
\end{proof}

We now prove a localized version  of the characterization of open mappings in terms of convergent nets, cf. \cite[Prop. 1.15]{Wi07}.

\begin{lemma}
	\label{T1}
	Let $X$ and $Y$ be 1st countable topological spaces. 
	If $y\in Y$, then for any mapping $f\colon X\to Y$ the following conditions are equivalent: 
	\begin{enumerate}[{\rm(i)}]
		\item\label{T1_item1} 
		For every $x\in f^{-1}(y)$ and every $U\in\Vc_X(x)$ one has $f(U)\in\Vc_Y(y)$. 
		\item\label{T1_item2}  
		For every sequence $\{y_n\}_{n\in\NN}$ in $Y$ with $y\in\liminf\limits_{n\to\infty}\{y_n\}$, one has 
		$f^{-1}(y)\subseteq\limsup\limits_{n\to\infty}f^{-1}(y_n)$.
	\end{enumerate}
\end{lemma}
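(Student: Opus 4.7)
The plan is to establish the two implications separately. First I would unwind Definition~\ref{deflim} in the two cases at hand: for a sequence of singletons, $y \in \liminf_n \{y_n\}$ means exactly that $y_n$ lies eventually in every neighborhood of $y$, and $x \in \limsup_n f^{-1}(y_n)$ means there exist indices $n_1 < n_2 < \cdots$ and points $x_k \in f^{-1}(y_{n_k})$ with $x_k \to x$ in $X$. With this dictionary in hand, each direction becomes a routine diagonal argument, with first countability invoked once per direction in order to reduce neighborhood-filter conditions to sequential ones.

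For (i)$\Rightarrow$(ii), fix a sequence $\{y_n\}$ in $Y$ with $y \in \liminf_n \{y_n\}$ and any $x \in f^{-1}(y)$. Using the first countability of $X$, pick a decreasing neighborhood base $\{U_m\}_{m \in \NN}$ of $x$. Hypothesis (i) makes each $f(U_m)$ a neighborhood of $y$, so one may inductively produce $n_1 < n_2 < \cdots$ with $y_{n_k} \in f(U_k)$, and then select $x_k \in U_k \cap f^{-1}(y_{n_k})$. Since $\{U_m\}$ is a decreasing base at $x$, the sequence $(x_k)$ converges to $x$, which gives $x \in \liminf_k f^{-1}(y_{n_k}) \subseteq \limsup_n f^{-1}(y_n)$, as required.

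For (ii)$\Rightarrow$(i), I argue by contraposition. Suppose there exist $x \in f^{-1}(y)$ and $U \in \Vc_X(x)$ with $f(U) \notin \Vc_Y(y)$. Using the first countability of $Y$, fix a decreasing neighborhood base $\{V_n\}_{n \in \NN}$ of $y$; since $f(U)$ fails to be a neighborhood of $y$, each $V_n$ meets $Y \setminus f(U)$, so one may pick $y_n \in V_n \setminus f(U)$. Then $y_n \to y$, hence $y \in \liminf_n \{y_n\}$. On the other hand, any subsequence $(y_{n_k})$ lifted via $f$ to points $x_k \in f^{-1}(y_{n_k})$ converging to $x$ would satisfy $x_k \in U$ for all large $k$, forcing $y_{n_k} = f(x_k) \in f(U)$, a contradiction. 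Hence $x \notin \limsup_n f^{-1}(y_n)$, negating (ii).

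The main obstacle is not the argument itself but keeping track of the defining quantifiers in Definition~\ref{deflim}: one must verify that for sequences of singletons, and for preimages of singletons under $f$, the abstract $\liminf$/$\limsup$ conditions collapse to the familiar sequential notions of convergence and of subsequential cluster point of a lift, after which the standard proof of the net characterization of openness transfers almost verbatim.
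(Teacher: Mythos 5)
Your proof is correct and follows essentially the same route as the paper's: a decreasing neighborhood base at $x$ combined with hypothesis (i) to produce indices $n_1<n_2<\cdots$ and lifts $x_k\in U_k\cap f^{-1}(y_{n_k})$ converging to $x$ for (i)$\Rightarrow$(ii), and a decreasing base at $y$ with points $y_n\in V_n\setminus f(U)$ for the contrapositive of (ii)$\Rightarrow$(i). The only cosmetic difference is that you run the first implication directly, whereas the paper wraps the identical construction in a proof by contradiction (invoking the closedness of $\limsup\limits_{n\to\infty}f^{-1}(y_n)$, which your version does not need).
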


\begin{proof}
	``\eqref{T1_item1}$\Rightarrow$\eqref{T1_item2}''  
	Fix an arbitrary sequence $\{y_n\}_{n\in\NN}$ in $Y$ with $y\in\liminf\limits_{n\to\infty}\{y_n\}$ and assume 
	$f^{-1}(y)\not\subseteq\limsup\limits_{n\to\infty}f^{-1}(y_n)$, 
	hence there exists $x\in f^{-1}(y)\setminus\limsup\limits_{n\to\infty}f^{-1}(y_n)$. 
	
	One has $X\in\Vc_X(x)$, 
	hence $f(X)\in\Vc_Y(y)$ by \eqref{T1_item1}. 
	Then, by $y\in\liminf\limits_{n\to\infty}\{y_n\}$, one has 
	$y_n\in f(X)$ for all but finitely many $n\in\NN$. 
	
	By Lemma~\ref{L2}, the set $U:=X\setminus\limsup\limits_{n\to\infty}f^{-1}(y_n)$ 
	satisfies $U\in\Vc_X(x)$. 
	Since the topological space $X$ is 1st countable, it then follows that there exists a sequence of open subsets $U_1\supseteq U_2\supseteq\cdots$ of $X$ 
	with $U_1\subseteq U$ and $\bigcap\limits_{n\ge 1}
	U_n=\{x\}$. 
	
	By \eqref{T1_item1} one has $f(U_1)\in\Vc_Y(y)$ hence, 
	since $y\in\liminf\limits_{n\to\infty}\{y_n\}$, 
	there exists $n_1\ge 1$ with $y_{n_1}\in f(U_1)$. 
	Assume we have $1\le n_1<\cdots<n_k$ with $y_{n_j}\in f(U_j)$ 
	for $j=1,\dots,k$ and $y_n\in f(U_k)$ for all $n\ge n_k$. 
	Then as in the construction of $n_1$ above, 
	there exists $n_{k+1}>n_k$ with $y_n\in f(U_{k+1})$ 
	for every $n\ge n_{k+1}$. 
	We thus obtain $1\le n_1<n_2<\cdots$ with $y_{n_j}\in f(U_j)$ 
	for every $j\ge 1$. 
	In particular, for every $j\ge 1$ there exists $x_j\in U_j$ with $f(x_j)=y_{n_j}$. 
	By the properties of the sequence $\{U_j\}_{j\ge 1}$, 
	it then follows that $x\in\liminf\limits_{j\to\infty}x_j$. 
	Since $x_j\in f^{-1}(y_{n_j})$ for every $j\ge 1$, 
	we then obtain $x\in \limsup\limits_{n\to\infty}f^{-1}(y_n)$, 
	which is a contradiction with the fact that 
	$x\in f^{-1}(y)\setminus\limsup\limits_{n\to\infty}f^{-1}(y_n)$. 
	
	``\eqref{T1_item2}$\Rightarrow$\eqref{T1_item1}''  
	Assume there exist $x\in f^{-1}(y)$ and $U\in\Vc_X(x)$ with $f(U)\not\in\Vc_Y(y)$. 
	
	Since the topological space $Y$ is 1st countable, there exists 
	a sequence of open subsets $V_1\supseteq V_2\supseteq\cdots$ of $Y$ 
	with $\bigcap\limits_{n\ge 1}V_n=\{y\}$. 
	Since $f(U)\not\in\Vc_Y(y)$ and $\{V_n\}_{n\ge 1}$ is a neighborhood base of $y\in Y$, it follows that for every $n\ge 1$ one has $V_n\not\subseteq f(U)$, hence there exists $y_n\in V_n\setminus f(U)$. 
	For every $n\ge 1$ one has $y_n\in V_n$, hence $y\in\liminf_{n\to\infty}\{y_n\}$. 
	Then, by \eqref{T1_item2}, one has 
	$f^{-1}(y)\subseteq\limsup\limits_{n\to\infty}f^{-1}(y_n)$. 
	Since $x\in f^{-1}(y)$, it then follows that $x\in \limsup\limits_{n\to\infty}f^{-1}(y_n)$, 
	hence there exist $1\le n_1<n_2<\cdots$ and $x_k\in f^{-1}(y_{n_k})$ for every $k\ge 1$ with $x\in\liminf\limits_{k\to\infty}\{x_k\}$. 
	Sice $U\in\Vc_X(x)$, it then follows that there exists 
	$k_0\ge 1$ with $x_{k_0}\in U$. 
	This implies $y_{n_{k_0}}=f(x_{k_0})\in f(U)$, 
	which is a contradiction with the fact that 
	$y_n\in V_n\setminus f(U)$ for every $n\ge 1$, 
	and we are done. 
\end{proof}

\subsection*{Dynamical systems}

\begin{lemma}
	\label{Gr80Th.4.1}
	Let $G\times X\to X$, be a continuous action of a separable locally compact group $G$ on a locally compact space $X$. 
	Assume that $H\subseteq G$ is a closed subgroup for which there exists a measurable cross-section of the quotient map $G\to G/H$. 
	If $q\colon X\to G/H$ is a continuous $G$-equivariant surjective mapping and one considers the closed $H$-invariant subset $Y:=q^{-1}(\1 H)\subseteq X$, then there exists a $*$-isomorphism 
	$\Cc_0(X)\rtimes G\simeq (\Cc_0(Y)\rtimes H)\otimes \Kc(L^2(G/H))$.  
\end{lemma}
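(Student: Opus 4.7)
The strategy is to recognize the hypotheses as describing $X$ as an induced $G$-space over $G/H$, and then to apply Green's imprimitivity theorem in the form that yields an exact (not merely Morita) isomorphism. First I would exhibit a $G$-equivariant homeomorphism $\Phi\colon G\times_H Y\to X$, where $G\times_H Y:=(G\times Y)/H$ is formed with respect to the $H$-action $h\cdot(g,y)=(gh^{-1},hy)$ and $G$ acts by left translation on the first coordinate. The map is $\Phi([g,y]):=g\cdot y$, which is manifestly well-defined, continuous, and $G$-equivariant. For surjectivity, given $x\in X$, any $g\in G$ with $gH=q(x)$ satisfies $g^{-1}\cdot x\in Y$, so $x=\Phi([g,g^{-1}\cdot x])$. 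For injectivity, from $g\cdot y=g'\cdot y'$ together with $q\circ\Phi([g,y])=gH$ one obtains $h:=g^{-1}g'\in H$ and $y=hy'$, hence $[g,y]=[g',y']$. That $\Phi^{-1}$ is continuous follows from properness of the $H$-action on $G\times Y$, which holds since $H$ is closed in $G$.

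Second, I would invoke the classical theorem (due to Green) asserting that for a closed subgroup $H\subseteq G$ admitting a measurable cross-section $G/H\to G$ and any locally compact $H$-space $Y$, there is a canonical $*$-isomorphism
\[
\Cc_0(G\times_H Y)\rtimes G\;\cong\;(\Cc_0(Y)\rtimes H)\otimes\Kc(L^2(G/H)).
\]
Such a cross-section furnishes a unitary $L^2(G)\cong L^2(G/H)\otimes L^2(H)$, and conjugating the canonical covariant pair for $(\Cc_0(G\times_H Y),G)$ by this unitary realizes it as the natural faithful representation of the right-hand side on $L^2(G/H)\otimes (\text{Hilbert module for }\Cc_0(Y)\rtimes H)$. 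Combined with step one, this yields the desired isomorphism.

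The main technical hurdle is not algebraic but topological: upgrading the continuous $G$-equivariant bijection $\Phi$ to a homeomorphism relies on properness of the $H$-action on $G\times Y$, which ensures that the orbit map is open and the quotient is locally compact Hausdorff. Once this identification is in place, the conclusion is immediate from Green's theorem, and the content of the present lemma is essentially the observation that the fibration $q\colon X\to G/H$ automatically exhibits $X$ as the $G$-space induced from the $H$-space $Y$.
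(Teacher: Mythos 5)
The paper's own ``proof'' of this lemma is a one-line citation of \cite[Th. 4.1]{Gr80}, which is stated there in exactly the form given here (a $G$-space with a continuous equivariant surjection onto $G/H$). Your route --- first identify $X$ with the induced space $G\times_H Y$, then quote Green's theorem in its induced-space formulation $\Cc_0(G\times_H Y)\rtimes G\simeq(\Cc_0(Y)\rtimes H)\otimes\Kc(L^2(G/H))$ --- is therefore not really an independent proof: the second step is an equivalent restatement of the very result being proved, and its justification via the unitary $L^2(G)\simeq L^2(G/H)\otimes L^2(H)$ glosses over the actual content (the imprimitivity theorem together with a stabilization argument). That is acceptable as a proof by citation, and it matches what the paper does; the only genuinely new content you supply is the identification $X\simeq G\times_H Y$, and that is precisely where your argument has a gap.

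Properness of the $H$-action on $G\times Y$ gives that $G\times_H Y$ is locally compact Hausdorff and that the orbit map is open, but it does \emph{not} give continuity of $\Phi^{-1}$: a continuous bijection between locally compact Hausdorff spaces need not be a homeomorphism, so some argument specific to $\Phi$ is required. Two standard repairs: (a) show $\Phi$ is open by using openness of the quotient map $G\to G/H$ (cf. \cite[Prop. 1.15]{Wi07}) to lift a net $x_i\to x_0=g_0y_0$, after passing to a subnet, to $g_i\to g_0$ with $g_iH=q(x_i)$; then $y_i:=g_i^{-1}x_i\in Y$ converges to $y_0$ and $x_i=\Phi([g_i,y_i])$ eventually lies in the $\Phi$-image of any basic neighbourhood of $[g_0,y_0]$; or (b) show $\Phi$ is a proper map, by covering $q(K)$ for compact $K\subseteq X$ with the image of a compact set $C\subseteq G$, so that $\Phi^{-1}(K)$ is a closed subset of the compact image of $C\times(C^{-1}K\cap Y)$, and then use that a proper continuous bijection onto a locally compact Hausdorff space is closed. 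With either repair your reduction is correct, but as written the key topological step is asserted for the wrong reason.
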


\begin{proof}
	See \cite[Th. 4.1]{Gr80}.
\end{proof}

\begin{example}\label{Gr80Th.4.1_ex1}
	\normalfont
	Let $\theta\in\RR$ and $G=(\RR,+)$ with its subgroup $H=(\ZZ,+)$. 
	For any fixed $r_1,r_2\in(0,\infty)$ and 
	$X:=r_1\TT\times r_2\TT=\{(z_1,z_2)\in\CC^2\mid \vert z_1\vert=r_1,\ \vert z_2\vert=r_2\}$ 
	we define an action of $G$ on $X$ by 
	$$\alpha_\theta \colon G\times X\to X,\quad 
	(t,(z_1,z_2))\mapsto (\ee^{2\pi\ie t}z_1,\ee^{2\pi\ie \theta t}z_2).$$
	Then the mapping 
	$$q\colon X\to G/H\simeq\{\ee^{2\pi\ie s}\mid s\in\RR\}=\TT,\quad (z_1,z_2)\mapsto z_1/r_1$$
	is $G$-equivariant. 
	Moreover, the subset $Y:=q^{-1}(\1H)=q^{-1}(1)=\{r_1\}\times r_2\TT\subseteq X$ is $H$-invariant, 
	and the corresponding action of $H=\ZZ$ on $Y$ is given by 
	$$H\times Y\to Y,\quad 
	(n,(r_1,z_2))\mapsto (r_1,\ee^{2\pi\ie \theta n}z_2) 
	=(r_1,(\ee^{2\pi\ie \theta})^n z_2).$$
	Therefore, denoting as usual by $\A_\theta$ the $C^*$-algebra generated by the rotation of angle $\theta$, 
	one clearly has $*$-isomorphisms 
	$\Cc_0(Y)\rtimes H\simeq \Cc(\TT)\rtimes\ZZ\simeq \A_\theta$. 
	(See \cite{Ri81}.) 
	Consequently, by Theorem~\ref{Gr80Th.4.1}, 
	\begin{equation}\label{Gr80Th.4.1_ex1_eq1}
	\Cc(r_1\TT\times r_2\TT)\rtimes_{\alpha_\theta}\RR\simeq \A_\theta\otimes\Kc(L^2(\TT)).
	\end{equation}
\end{example}

\subsection*{Ideal spaces of $C^*$-algebras}
For any $C^*$-algebra $\A$ we denote by $\Id(\A)$ its set of closed two-sided ideals and $\Prim(\A)\subseteq\Id(\A)$ is the set of all primitive ideals, i.e., the kernels of irreducible $*$-representations of $\A$.

There are two topologies $\tau_w\subseteq\tau_s$ on $\Id(\A)$ which can be described as follows.
Use the quotient maps $\A\to \A/\I$, $a\mapsto a+\I$, for all $\I\in\Id(\A)$ 
to define the family of functions 
$$\varphi_a\colon\Id(\A)\to\RR^+,\quad \varphi_a(\I):=\Vert a+\I\Vert \text{ for all }\I\in\Id(\A)\text{ and }a\in \A.$$
Then $\tau_s$ (respectively, $\tau_w$) is the weakest topology on $\Id(\A)$ with respect to which all the functions $\varphi_a$ for $a\in \A$ are continuous (respectively, lower semi-continuous), cf. \cite[p. 84]{AS93}. 

{\bfi Unless otherwise mentioned, $\Prim(\A)$ is endowed with the topology $\tau_w$}, 
(which coincides with the Jacobson topology of $\Prim(\A)$, 
whose closed sets are $\hull(\I)$ for $\I\in\Id(\A)$, cf. Lemma~\ref{primals} below). 

\begin{remark}
	\normalfont
The topological space $(\Id(\A),\tau_s)$ is compact Hausdorff, since it is homeomorphic to $\Cl(\Prim(\A))$ with its Fell topology via the hull/kernel mappings, 
cf. Lemma~\ref{primals} below. 
Compare also Notation~\ref{ws} for general topological spaces. 
\end{remark}

\begin{definition}[Primal ideals]
\normalfont 
We introduce the following subsets of $\Id(\A)$: 
\begin{align}
\Primal(\A) & :=\overline{\Prim(\A)}^{\tau_w} \\
\Primal'(\A) & :=\Primal(\A)\setminus\{\A\} 
\end{align}
\end{definition}

\begin{definition}[Minimal primal ideals]
\normalfont 
	We introduce the following subsets of $\Primal(\A)$ and $\Primal'(\A)$, respectively: 
\begin{align}
\MinPrimal(\A) & :=\text{the minimal elements of }\Primal(\A)\\
\tau &:=\tau_w\vert_{\MinPrimal(\A)}=\tau_s\vert_{\MinPrimal(\A)} \quad  
(\text{cf. \cite[Cor. 4.3(a)]{A87}}) \\
\Sub(\A) & :=\overline{\MinPrimal(\A)}^{\tau_s}\cap \Primal'(\A) 
=\overline{\MinPrimal(\A)}^{\tau_s}\setminus\{\A\}
\end{align}
\end{definition}

\begin{lemma}\label{primals}
	For every $C^*$-algebra $\A$, the homeomorphism 
	$$\hull\colon\Id(\A)\to\Cl(\Prim(\A)), \ \hull(\Jc):=\{\P\in\Prim(\J)\mid \J\subseteq\P\}\simeq\Prim(\A/\J)$$
	and its inverse 
	$$\ker\colon \Cl(\Prim(\A))\to\Id(\A),\quad \ker(L):=\bigcap_{\P\in L}\P$$
	give rise to the commutative diagram
	$$\xymatrix{
		\MinPrimal(\A) \ar@{^{(}->}[r] \ar[d]_{\hull} 
		& \Sub(\A) \ar@{^{(}->}[r] \ar[d]_{\hull}
		& \Primal'(\A) \ar@{^{(}->}[r] \ar[d]_{\hull} 
		& \Primal(\A) \ar[d]_{\hull}\\
		\Mc\Lc(\Prim(\A))  \ar@{^{(}->}[r]  \ar@<-1ex>[u]_{\ker}
		& \Mc\Lc^s(\Prim(\A)) \ar@{^{(}->}[r] \ar@<-1ex>[u]_{\ker}
		& \Lc'(\Prim(\A)) \ar@{^{(}->}[r] \ar@<-1ex>[u]_{\ker}
		& \Lc(\Prim(\A)) \ar@<-1ex>[u]_{\ker}
	} $$
	whose vertical arrows are homeomorphisms.  
\end{lemma}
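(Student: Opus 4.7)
The plan is to assemble the diagram from three classical ingredients, none of which requires a substantial new argument in this paper. First, I would invoke the hull-kernel Galois correspondence: for any $C^*$-algebra $\A$, the maps $\hull$ and $\ker$ are mutually inverse order-reversing bijections between $\Id(\A)$ and $\Cl(\Prim(\A))$, with $\Prim(\A)$ carrying its Jacobson topology (which by definition coincides with the restriction of $\tau_w$). Transporting the topology $\tau_s$ on $\Id(\A)$ through $\hull$ yields exactly the Fell topology on $\Cl(\Prim(\A))$; this is standard and can be referenced to \cite{AS93}. Thus the outermost vertical pair of arrows is already a homeomorphism.

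Next, I would identify the four subset correspondences. By definition $\Primal(\A) = \overline{\Prim(\A)}^{\tau_w}$, and Archbold's characterization identifies the $\tau_w$-closure of $\Prim(\A)$ in $\Id(\A)$ with precisely those $\J \in \Id(\A)$ whose hulls arise as $\tau_w$-limit-point sets of nets in $\Prim(\A)$, i.e.\ with $\Lc(\Prim(\A))$. Because $\hull$ is order-reversing, the minimal elements of $\Primal(\A)$ correspond exactly to the maximal elements of $\Lc(\Prim(\A))$, giving $\MinPrimal(\A) \leftrightarrow \Mc\Lc(\Prim(\A))$. Since $\hull(\A) = \emptyset$ and no other ideal maps to $\emptyset$, deleting $\A$ on the left matches deleting $\emptyset$ on the right, whence $\Primal'(\A) \leftrightarrow \Lc'(\Prim(\A))$. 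Finally, the restriction of $\hull$ is a homeomorphism for $\tau_s$ and the Fell topology, so the $\tau_s$-closure of $\MinPrimal(\A)$ intersected with $\Primal'(\A)$ transports to the Fell-closure of $\Mc\Lc(\Prim(\A))$ intersected with $\Lc'(\Prim(\A))$, which is precisely $\Mc\Lc^s(\Prim(\A))$ as defined in Notation~\ref{ws}.

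The only point requiring mild care is commutativity of each square, but this is automatic once both the set-theoretic bijections and the topological identifications above have been checked, since $\hull$ respects the subset inclusions on the left side and correspondingly on the right. The main potential obstacle would be a direct verification of Archbold's characterization of primal ideals as limit-set loci, but within the scope of this note that is taken as a known result from \cite{AS93} and \cite{LS10}, so the lemma reduces to a compilation rather than a new computation.
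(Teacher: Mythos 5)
Your proposal is correct and matches the paper's approach: the paper's proof is simply a list of citations (\cite{AS93}, \cite{AB86}, \cite{KST95}, \cite{LS10}) covering exactly the ingredients you assemble --- the hull--kernel homeomorphism for $\tau_s$/Fell, the Archbold--Batty identification of primal ideals with kernels of limit sets, and the order-reversal sending minimal primals to maximal limit sets. You have merely written out the compilation that the paper leaves implicit.
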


\begin{proof}
	See \cite[Cor. 1.3 and Lemma 1.4]{AS93}, \cite[Prop. 3.2]{AB86}, 
	\cite[p. 46]{KST95}, and also \cite{LS10}. 
\end{proof}

\begin{remark}
\normalfont
In the setting of Lemma~\ref{primals} we note that $\Mc\Lc^s(\A)$ is a closed subset of the locally compact Hausdorff space $\Lc'(\A)$, cf. \cite[end of \S 1]{LS10}, and consequently $\Sub(\A)$ is a closed subset of the locally compact Hausdorff space $\Primal'(\A)$. 
\end{remark}

\begin{definition}[Glimm ideals]
\normalfont 
The set of \emph{Glimm ideals} of the $C^*$-algebra $\A$ 
is the image of the mapping 
\begin{align*}
q\colon & \Prim(\A)\to\Glimm(\A) (\subseteq\Id(\A))\\ 
& \hull(q(\P)):=\{\P'\in\Prim(\A)\mid(\forall f\in\Ccb(\Prim(\A)))\ f(\P')=f(\P)\}
\end{align*}
and we define 
$\tau_q$ as the quotient topology of $\Glimm(\A)$ arising from $(\Prim(\A),\tau_w)$ via $q$. 
\end{definition}

\begin{definition}
\normalfont 
We define a binary relation $\sim$ on $\Prim(\A)$ by 
$$\P\sim \Q\iff (\forall C\in \Cl(\Prim(\A)))\quad \text{ either } \{\P,\Q\}\subseteq C \text{ or }\{\P,\Q\}\cap C=\emptyset.$$
We say $\A$ is \emph{quasi-standard} if  
$\sim$ is an open equivalence relation. 
\end{definition}

\begin{remark}
	\normalfont 
	This binary relation is symmetric and reflexive, but in general not transitive. 
	The failure of $\sim$ from being transitive is measured by the so-called \emph{connecting order} $\Orc(\A)\in\{1,2,3,\dots\}\cup\{\infty\}$ 
	and we have $\Orc(\A)=1$ if and only if $\sim$ is transitive, hence is an equivalence relation, cf. \cite[\S 2]{S93}.
\end{remark}

\begin{remark}\label{quasi}
	\normalfont
	We collect a few remarks on a $C^*$-algebra $\A$, which are needed in the proof of Corollary~\ref{M}.
	\begin{enumerate}[{\rm(i)}]
		\item\label{quasi_item1} $\A$ is quasi-standard iff $(\MinPrimal(\A),\tau)=(\Glimm(\A),\tau_q)$ by \cite[Th. 3.3]{AS90} and also \cite[p. 48]{KST95}.
		\item\label{quasi_item2} If $\A$ is quasi-standard, then $\MinPrimal(\A)=\Sub(\A)=\Glimm(\A)$ as sets and as topological spaces, cf. \cite[p. 238]{AKS15}.
		\item\label{quasi_item3} If $\A$ is not quasi-standard and  $\MinPrimal(\A)=\Glimm(\A)$ as sets, then $\MinPrimal(\A)\subsetneqq \Sub(\A)$, cf. again \cite[p. 238]{AKS15}.
	\end{enumerate}
\end{remark}

\section{The space of primitive ideals of $C^*(G_\theta)$}

\subsection{Williams parameterization of $\Prim(X\rtimes_\alpha A)$}

\begin{notation}[induced representations]
	\normalfont 
	Let $A$ be a locally compact abelian group. 
	For every closed subgroup $B\subseteq A$ and every $\chi\in\widehat{B}:=\Hom(B,\TT)$ we define  
	$$\Ccb(A,\chi):=\{\varphi\in\Ccb(A)\mid 
	(\forall a\in A,b\in B)\ \varphi(ab)=\chi(b)^{-1}\varphi(a)\}$$
	and 
	$$(\forall \varphi\in\Ccb(A,\chi))\quad 
	\Vert \varphi\Vert_{\chi} :=\Bigl(\int\limits_{A/B}\vert\varphi(aB)\vert^2\de(aB)\Bigr)^{1/2}\in[0,\infty].$$
	We also denote by $L^2(A,\chi)$ the Hilbert space obtained as the completion of the pre-Hilbert space $\{\varphi\in\Ccb(A,\chi)\mid \Vert\varphi\Vert_\chi<\infty\}$ with respect to the norm $\Vert\cdot\Vert_\chi$. 
	Then the regular representation $\lambda_A\colon A\to\Bc(\Ccb(A))$, $\lambda_A(a)\varphi:=\varphi(a+\cdot)$, leaves invariant the space $\Ccb(A,\chi)$ hence it gives rise to a unitary representation 
	in the Hilbert space $L^2(A,\chi)$, to be denoted as 
	$$\Ind_B^A(\chi)\colon A\to\Bc(L^2(A,\chi))$$
	(the unitary \emph{representation of $A$ induced from} $\chi\in\widehat{B}$).
\end{notation}

\begin{notation}
	\normalfont
	We recall that a \emph{$C^*$-dynamical system} $(\Cg,A,\alpha)$ consists of a $C^*$-algebra $\Cg$, a locally compact group $A$, and a continuous right action of $A$ by $*$-automorphisms of~$\Cg$, denoted $\alpha\colon \Cg\times A\to\Cg$, $(\xi,a)\mapsto \xi.a$. 
	The \emph{crossed product} corresponding to the $C^*$-dynamical system $(\Cg,A,\alpha)$ is denoted by $\Cg\rtimes_\alpha A$ unless otherwise mentioned. 
	
	Let $\Id^A(\Cg)$ be the set of all 
	closed 2-sided ideals $\Jg\subseteq\Cg$ satisfying $\Jg.A\subseteq\Jg$.  If $\Jg\in\Id^A(\Cg)$, then the action of $A$ on $\Jg$ is also denoted by $\alpha$, hence the corresponding 
	crossed product is consequently denoted by $\Jg\rtimes_\alpha A$. 
	The natural action of $A$ on $\Cg/\Jg$ is denoted by $\alpha^\Jg$, 
	and one has the short exact sequence 
	\begin{equation}\label{exact}
	0\to\Jg\rtimes_\alpha A\hookrightarrow\Cg\rtimes_\alpha A\to 
	(\Cg/\Jg)\rtimes_{\alpha^\Jg}A\to0. 
	\end{equation}
	(See \cite[Prop. 3.19]{Wi07}.) 
	
	A \emph{covariant representation} of the $C^*$-dynamical system $(\Cg,A,\alpha)$ 
	is a pair $(\mu,\pi)$ consisting of a $*$-representation $\mu\colon\Cg\to\Bc(\Hc)$ and a strongly continuous unitary representation $\pi\colon A\to\Bc(\Hc)$ satisfying $\mu(\xi.a)=\pi(a)\mu(\xi)\pi(a)^{-1}$ for all $\xi\in\Cg$ and $a\in A$. 
	Such a covariant representation gives rise to a unique $*$-representation 
	$\mu\rtimes\pi\colon \Cg\rtimes_\alpha A\to\Bc(\Hc)$ called its \emph{integrated form}, satisfying 
	\begin{equation}\label{integrated}
	(\mu\rtimes\pi)(f)=\int\limits_A\mu(f(a))\pi(a)\de a
	\end{equation}
	for all $f\in L^1(A,\Cg)$. 
	The covariant representation $(\mu,\pi)$ is called \emph{irreducible} if $\mu(\Cg)'\cap\pi(A)'=\CC\1$ or, equivalently, if  
	$\mu\rtimes\pi$ is an irreducible $*$-representation. 
\end{notation}

\begin{lemma}\label{param0}
	If $(\mu,\pi)$ is a covariant representation of the $C^*$-dynamical system  $(\Cg,A,\alpha)$ and one denotes $\Jg:=\Ker\mu\subseteq\Cg$, 
	then the following assertions hold: 
	\begin{enumerate}[{\rm(i)}]
		\item\label{param0_item1} $\Jg\in\Id^A(\Cg)$ and $\Jg\rtimes_\alpha A\subseteq\Ker(\mu\rtimes\pi)$. 
		\item\label{param0_item2} If moreover 
		the $C^*$-algebra $(\Cg/\Jg)\rtimes_{\alpha^\Jg}A$ is simple, then either $\Jg\rtimes_\alpha A=\Ker(\mu\rtimes\pi)$ or $\mu\rtimes\pi=0$. 
	\end{enumerate}
\end{lemma}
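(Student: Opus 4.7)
The plan has two parts matching the structure of the statement. For part \eqref{param0_item1}, I would first verify the $A$-invariance of $\Jg$ directly from the covariance identity: if $\xi\in\Jg$, then
$$\mu(\xi.a)=\pi(a)\mu(\xi)\pi(a)^{-1}=0,$$
so $\Jg.A\subseteq\Jg$, i.e., $\Jg\in\Id^A(\Cg)$, which makes the crossed product $\Jg\rtimes_\alpha A$ meaningful as the ideal appearing in \eqref{exact}. To establish the inclusion $\Jg\rtimes_\alpha A\subseteq\Ker(\mu\rtimes\pi)$, I would invoke the integrated form \eqref{integrated}: for every $f\in L^1(A,\Jg)$ one has
$$(\mu\rtimes\pi)(f)=\int_A\mu(f(a))\pi(a)\,\de a=0$$
since $\mu(f(a))=0$ for every $a\in A$, and density of $L^1(A,\Jg)$ in $\Jg\rtimes_\alpha A$ extends this to all of $\Jg\rtimes_\alpha A$ by continuity.

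For part \eqref{param0_item2}, I would combine part \eqref{param0_item1} with the short exact sequence \eqref{exact}. Since $\Jg\rtimes_\alpha A\subseteq\Ker(\mu\rtimes\pi)$, the universal property of $C^*$-quotients yields a $*$-representation $\widetilde{\mu\rtimes\pi}$ of the quotient $(\Cg\rtimes_\alpha A)/(\Jg\rtimes_\alpha A)\simeq (\Cg/\Jg)\rtimes_{\alpha^\Jg} A$ whose kernel equals $\Ker(\mu\rtimes\pi)/(\Jg\rtimes_\alpha A)$. Under the simplicity hypothesis on $(\Cg/\Jg)\rtimes_{\alpha^\Jg} A$ this ideal is either zero, in which case $\Ker(\mu\rtimes\pi)=\Jg\rtimes_\alpha A$, or the whole algebra, in which case $\Ker(\mu\rtimes\pi)=\Cg\rtimes_\alpha A$ and hence $\mu\rtimes\pi=0$. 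This exhausts the dichotomy.

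I expect no serious obstacle: the lemma is essentially formal once \eqref{exact} (supplied by \cite[Prop. 3.19]{Wi07}) and the defining formula \eqref{integrated} are in hand. The only bookkeeping worth double-checking is that the factorization of $\mu\rtimes\pi$ through the quotient genuinely matches the identification $(\Cg\rtimes_\alpha A)/(\Jg\rtimes_\alpha A)\simeq(\Cg/\Jg)\rtimes_{\alpha^\Jg} A$ as $C^*$-algebras, but this is precisely the content of the cited short exact sequence. The real mathematical weight will come later, when the simplicity hypothesis on the quotient crossed product is verified for the specific dynamical systems arising from the Mautner group $G_\theta$.
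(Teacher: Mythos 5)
Your proposal is correct and follows essentially the same route as the paper: part (i) via the covariance identity, the integrated form \eqref{integrated} on $L^1(A,\Jg)$, and density; part (ii) via the short exact sequence \eqref{exact} and the observation that simplicity of $(\Cg/\Jg)\rtimes_{\alpha^\Jg}A$ forces the ideal $\Ker(\mu\rtimes\pi)\supseteq\Jg\rtimes_\alpha A$ to be either $\Jg\rtimes_\alpha A$ itself or the whole crossed product. The only cosmetic difference is that the paper phrases the dichotomy as maximality of the ideal $\Jg\rtimes_\alpha A$ rather than as the kernel of the factored representation, which is the same argument.
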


\begin{proof}
	\eqref{param0_item1} 
	See \cite[Lemma 6.16]{Wi07}. 
	In some more detail, 
	since $\Jg=\Ker\mu$, it then follows by \eqref{integrated} that $L^1(A,\Jg)\subseteq\Ker(\mu\rtimes\pi)$. 
	Then $\Jg\rtimes_\alpha A\subseteq\Ker(\mu\rtimes\pi)$ 
	since $L^1(A,\Jg)$ is dense in $\Jg\rtimes_\alpha A$.
	
	\eqref{param0_item2} 
	Since the $C^*$-algebra $(\Cg/\Jg)\rtimes_{\alpha^\Jg}A$ is simple, it follows by the exact sequence~\eqref{exact} that $\Jg\rtimes_\alpha A$ is a maximal ideal of $\Cg\rtimes_\alpha A$. 
	Then, by \eqref{param0_item1}, 
	we obtain either $\Jg\rtimes_\alpha A=\Ker(\mu\rtimes\pi)$ or $\Ker(\mu\rtimes\pi)=\Cg\rtimes_\alpha A$, 
	and this completes the proof. 
\end{proof}

\begin{lemma}\label{param1}
	Let $A$ be a locally compact abelian group, $X$ be a locally compact space, 
	and assume that both $A$ and $X$ are second countable. 
	We define the representation by multiplication operators 
	$$M\colon L^\infty(A)\to\Bc(\Ccb(A)),\quad M(\varphi)f:=\varphi f$$
	and the evaluation functionals
	$$(\forall x\in X)\quad \ev_x\colon\Cc_0(X)\to\CC,\ \ev_x(\xi):=\xi(x).$$
	If 
	$$\alpha\colon A\times X\to X,\quad (a,x)\mapsto\alpha(a,x)=\alpha_x(a)=a.x$$
	is a continuous action of $A$ on $X$, then we also define for every $x\in X$ 
	its corresponding stability group 
	$$A_x:=\{a\in A\mid a.x=x\}$$
	and the operators 
	$$\alpha_x^*\colon \Cc_0(X)\to\Ccb(A),\quad \xi\mapsto\xi\circ\alpha_x$$
	and 
	$$M^\alpha_x\colon\Cc_0(X)\to\Bc(\Ccb(A)),\quad M^\alpha_x:= M\circ\alpha_x^*.$$
	Then the following assertions hold true. 
	\begin{enumerate}[{\rm(i)}]
		\item\label{param1_item1} 
		One has $M^\alpha_x(\Cc_0(X))\Ccb(A,\chi)\subseteq \Ccb(A,\chi)$ for every $\chi\in\widehat{A_x}=\Hom(A_x,\TT)$ and  $x\in X$. 
		This gives rise to a $*$-representation 
		$$\widetilde{M}^\alpha_x\colon \Cc_0(X)\to\Bc(L^2(A,\chi))$$
		for which the pair $(\widetilde{M}^\alpha_x,\Ind_{A_x}^A(\chi))$ is an irreducible covariant representation of the $C^*$-dynamical system $(\Cc_0(X),A)$. 
		\item\label{param1_item2}  
		The mapping 
		$$\Phi\colon X\times\widehat{A}\to\Prim(C^*(X\rtimes_\alpha A)),\quad (x,\tau)\mapsto\Ker(\widetilde{M}^\alpha_x\rtimes\Ind_{A_x}^A(\tau\vert_{A_x}))$$
		is surjective, continuous, and open. 
		\item\label{param1_item3}  
		For any $(x_1,\tau_1),(x_2,\tau_2)\in X\times\widehat{A}$ one has 
		$$\Phi(x_1,\tau_1)=\Phi(x_2,\tau_2) 
		\iff\begin{cases} 
		1. & \overline{A.x_1}=\overline{A.x_2} \ (\Rightarrow A_{x_1}=A_{x_2})\\
		2. & \tau_1\vert_{A_{x_1}}=\tau_2\vert_{A_{x_1}}.
		\end{cases}$$
		\item\label{param1_item4}
		If we define the equivalence relation $\sim$ on $X\times\widehat{A}$ by 
		$$(x_1,\tau_1)\sim(x_2,\tau_2)\iff \Phi(x_1,\tau_1)=\Phi(x_2,\tau_2)$$ 
		and we denote by $[(x,\tau)]:=\Phi^{-1}(\Phi(x,\tau))\in(X\times\widehat{A})/\sim$ the equivalence class of an arbitrary $(x,\tau)\in X\times\widehat{A}$, 
		then the mapping 
		$$\widetilde{\Phi}\colon (X\times\widehat{A})/\sim\ \to\Prim(C^*(X\rtimes_\alpha A)),\quad [(x,\tau)]\mapsto\Phi(x,\tau)$$
		is a well-defined homeomorphism. 
	\end{enumerate}
\end{lemma}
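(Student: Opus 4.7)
My plan is to obtain assertions (i)--(iv) in order, using as main inputs the Mackey--Williams analysis of irreducible covariant representations of $(\Cc_0(X),A,\alpha)$ and the solution of the Effros--Hahn conjecture for amenable (in particular abelian) groups, both of which are available in \cite{Wi07}.

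For (i), I would first check directly that $M_x^\alpha(\Cc_0(X))$ preserves $\Ccb(A,\chi)$: for $\xi\in\Cc_0(X)$, $\varphi\in\Ccb(A,\chi)$, $a\in A$, and $b\in A_x$, one has $(M_x^\alpha(\xi)\varphi)(ab)=\xi((ab).x)\varphi(ab)=\xi(a.x)\chi(b)^{-1}\varphi(a)=\chi(b)^{-1}(M_x^\alpha(\xi)\varphi)(a)$. Covariance of $(\widetilde{M}_x^\alpha,\Ind_{A_x}^A(\chi))$ reduces on the dense subspace $\Ccb(A,\chi)\subseteq L^2(A,\chi)$ to the identity $\alpha_x^\ast(\xi.a_0)(a)=\xi((a_0+a).x)=\alpha_x^\ast(\xi)(a_0+a)$, which amounts to associativity of $\alpha$ combined with the commutativity of $A$. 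Irreducibility then follows from the standard commutant computation: the commutant of $\widetilde{M}_x^\alpha(\Cc_0(X))$ consists of multiplications by bounded functions factoring through $\alpha_x^\ast$, and forcing commutation with $\Ind_{A_x}^A(\chi)(A)$ reduces these to scalars.

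The surjectivity of $\Phi$ in (ii) is the main obstacle and is essentially the content of the Effros--Hahn conjecture, which holds for amenable groups. Concretely, every primitive ideal of $\Cc_0(X)\rtimes_\alpha A$ is the kernel of an irreducible covariant representation $(\mu,\pi)$, and Mackey's analysis for abelian $A$ identifies $\mu$ with $\widetilde{M}_x^\alpha$ for some $x\in X$ and $\pi$ with $\Ind_{A_x}^A(\chi)$ for some $\chi\in\widehat{A_x}$. Since $A_x\subseteq A$ is a closed subgroup of an abelian locally compact group, every such $\chi$ extends to some $\tau\in\widehat{A}$ by Pontryagin duality, yielding surjectivity of $\Phi$. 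Continuity and openness then follow from Fell's description of the Jacobson topology on the space of induced representations, via joint continuity (and openness after passing to the quotient) of the assignment $(x,\tau)\mapsto\Ind_{A_x}^A(\tau|_{A_x})$; the joint continuity uses the upper semicontinuity of the stabilizer map $x\mapsto A_x$, which is available in the present second countable setting.

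For (iii), from $\Phi(x_1,\tau_1)=\Phi(x_2,\tau_2)$ I would restrict both representations to $\Cc_0(X)\subseteq M(\Cc_0(X)\rtimes_\alpha A)$, so that Lemma~\ref{param0}(i) forces $\Ker\widetilde{M}_{x_1}^\alpha=\Ker\widetilde{M}_{x_2}^\alpha$; this kernel is the vanishing ideal of $\overline{A.x_i}$, whence $\overline{A.x_1}=\overline{A.x_2}$ and in particular $A_{x_1}=A_{x_2}$. Comparing $\tau_1|_{A_{x_1}}$ and $\tau_2|_{A_{x_1}}$, and using Mackey's subgroup theorem to distinguish induced representations of $A$ by their inducing character of the normal subgroup $A_{x_1}$, then yields the second condition; the converse is immediate, since both conditions together imply unitary equivalence of the two covariant representations. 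Finally, (iv) is formal: $\widetilde{\Phi}$ is well-defined and bijective by (ii)--(iii), and its bicontinuity follows from the continuity and openness of $\Phi$ via the universal property of the quotient topology.
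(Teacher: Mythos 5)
Your overall architecture is the right one, and it is in fact the architecture behind the result the paper simply cites for this lemma (\cite[Prop.~5.4, Props.~8.24, 8.27, and Th.~8.39]{Wi07}): the Mackey-type analysis of covariant representations for abelian $A$, the Effros--Hahn/Gootman--Rosenberg theorem for surjectivity, and Fell's topology for the continuity and openness statements. Your verification in (i) that $M^\alpha_x(\Cc_0(X))$ preserves $\Ccb(A,\chi)$ and the commutant computation are fine, and the forward direction of (iii) via restriction to $\Cc_0(X)\subseteq M(\Cc_0(X)\rtimes_\alpha A)$ is essentially correct (though what you need there is not Lemma~\ref{param0}\eqref{param0_item1} but the fact that $\Ker\mu$ is recoverable from $\Ker(\mu\rtimes\pi)$ through the canonical extension of a nondegenerate representation to the multiplier algebra).

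There is, however, a genuine gap, and it recurs in two places: you conflate unitary equivalence with weak equivalence (equality of kernels). In (ii) you assert that Mackey's analysis ``identifies $\mu$ with $\widetilde{M}^\alpha_x$ and $\pi$ with $\Ind_{A_x}^A(\chi)$'' for every irreducible covariant pair; this is false precisely in the non-type-I situation this paper is about --- the correct statement (and the actual content of Effros--Hahn/Gootman--Rosenberg) is only that every primitive ideal \emph{coincides} with the kernel of some induced representation, which still gives surjectivity but not by the route you describe. More seriously, in (iii) you dismiss the converse as ``immediate, since both conditions together imply unitary equivalence of the two covariant representations.'' They do not: take $A=\ZZ$ acting on $X=\TT$ by an irrational rotation and $x_1$, $x_2$ on distinct (dense) orbits; then $\overline{A.x_1}=\overline{A.x_2}=\TT$, $A_{x_1}=A_{x_2}=\{0\}$, and the two irreducible representations of $\A_\theta$ so obtained have the same kernel $\{0\}$ (by simplicity) but are famously inequivalent. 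The converse implication in \eqref{param1_item3} is in fact one of the substantive points of \cite[Th.~8.39]{Wi07}: one must prove that the kernel of $\widetilde{M}^\alpha_x\rtimes\Ind_{A_x}^A(\tau\vert_{A_x})$ depends only on the quasi-orbit $\overline{A.x}$ and on $\tau\vert_{A_x}$, which requires a weak-containment argument (or, in the concrete situations of this paper, simplicity of $(\Cc_0(X)/\Jg)\rtimes_{\alpha^{\Jg}}A$ as exploited in Lemma~\ref{param0}\eqref{param0_item2}). As written, your proof of the ``if'' direction of (iii) would fail exactly for the Mautner groups, whose whole interest lies in the existence of inequivalent irreducible representations sharing a kernel.
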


\begin{proof}
	\eqref{param1_item1} See \cite[Prop. 5.4, Prop. 8.24, and Prop. 8.27]{Wi07}.
	
	\eqref{param1_item2} See \cite[Th. 8.39]{Wi07}.
	
	\eqref{param1_item3} See \cite[Th. 8.39]{Wi07}. 
	The implication $\overline{A.x_1}=\overline{A.x_2} \Rightarrow A_{x_1}=A_{x_2}$ follows by \cite[Lemma 8.34]{Wi07}.
	
	\eqref{param1_item4} See \cite[Th. 8.39]{Wi07}. 
\end{proof}

\begin{definition}\label{param2}
	\normalfont
	In the setting of Lemma~\ref{param1},  
	the homeomorphism $\widetilde{\Phi}$ will be hereafter called as the \emph{Williams parameterization} of $\Prim(C^*(X\rtimes_\alpha A))$. 
\end{definition}

\begin{remark}\label{param2.6}
	\normalfont 
	In the setting of Lemma~\ref{param1},  
	if $x_0\in X$ is a point whose orbit $A.x_0$ is a closed subset of $X$, then for every $\tau\in\widehat{A}$ there exists a $*$-isomorphism 
	\begin{equation}
	\label{param2.6_eq1}
	(\Cc_0(X)\rtimes_\alpha A)/\Phi(x_0,\tau)\simeq \Kc(L^2(A/A_{x_0})).
	\end{equation}
	This can be obtained along the lines of the proof of \cite[Prop. 7.31]{Wi07}, but we provide the details since we are not aware of any precise reference for this result. 
	
	In fact, since $A.x_0$ is a closed subset of $X$, it easily follows by the definition of $\widetilde{M}^\alpha_{x_0}=:\mu$ in Lemma~\ref{param1} that 
	$$\Ker \widetilde{M}^\alpha_{x_0}=\{\xi\in \Cc_0(X)\mid \xi\vert_{A.x_0}=0\}=:\Jg.$$
	It follows by Lemma~\ref{param0}\eqref{param0_item1} that 
	$\widetilde{M}^\alpha_x\rtimes_\alpha\Ind_{A_{x_0}}^A(\tau\vert_{A_{x_0}})$ 
	factors through a representation 	$\overline{\mu}\rtimes_\alpha\Ind_{A_{x_0}}^A(\tau\vert_{A_{x_0}})$  of $(\Cc_0(X)/\Jg)\rtimes_{\alpha^\Jg} A$ 
	with $\Ker\overline{\mu}=\{0\}$. 
	Since $\widetilde{M}^\alpha_x\rtimes_\alpha\Ind_{A_{x_0}}^A(\tau\vert_{A_{x_0}})$ is irreducible by Lemma~\ref{param1}, it follows that 
	$\overline{\mu}\rtimes_\alpha\Ind_{A_{x_0}}^A(\tau\vert_{A_{x_0}})$ is irreducible as well. 
	
	On the other hand, $\Cc_0(X)/\Jg\simeq\Cc_0(A.x_0)$ 
	and moreover, the mapping $A/A_{x_0}\to A.x_{0}$, $a\mapsto a.x_0$, is a homeomorphism since $A.x_0$ is a closed subset of $X$, hence locally closed. 
	Therefore one has $*$-isomorphisms 
	\begin{align}
	(\Cc_0(X)/\Jg)\rtimes_{\alpha^\Jg} A
	& \simeq \Cc_0(A/A_{x_0})\rtimes A \nonumber \\
	&\simeq C^*(A_{x_0})\otimes \Kc(L^2(A/A_{x_0})) \nonumber  \\
	\label{param2.6_eq2}
	& \simeq \Cc_0(\widehat{A_{x_0}})\otimes \Kc(L^2(A/A_{x_0}))
	\end{align}
	where the next-to-last $*$-isomorphism is a very special case of Theorem~\ref{Gr80Th.4.1} when $G:=A$, $H:=A_{x_0}$, and $q$ is the identity map of $G/H$. 
	Now, since $\overline{\mu}\rtimes_\alpha\Ind_{A_{x_0}}^A(\tau\vert_{A_{x_0}})$  is an irreducible $*$-representation of  $(\Cc_0(X)/\Jg)\rtimes_{\alpha^\Jg} A$ whose image is $*$-isomorphic to $(\Cc_0(X)\rtimes_\alpha A)/\Phi(x_0,\tau)$, 
	one obtains \eqref{param2.6_eq1} by \eqref{param2.6_eq2}.
\end{remark}

\begin{remark}\label{param2.7}
	\normalfont 
	In the setting of Lemma~\ref{param1},  
	assume that $x_0\in X$ is a fixed point with respect to the action of $A$ or, equivalently, $A_{x_0}=A$. 
	Then for every $\tau\in\widehat{A}$ one has 
	$(\Cc_0(X)\rtimes_\alpha A)/\Phi(x_0,\tau)\simeq\CC$ by Remark~\ref{param2.6}, 
	but this conclusion can be directly obtained as follows. 
	
	If $\tau\in\widehat{A}$ then 
	$\dim L^2(A,\tau\vert_{A_{x_0}})=1$; 
	more exactly, the mapping $\varphi\mapsto\varphi(\1)$ gives a unitary operator $L^2(A,\tau\vert_{A_{x_0}})\to\CC$. 
	This easily implies that  $\Ind_{A_{x_0}}^A(\tau\vert_{A_{x_0}})=\tau\colon A\to\CC$ 
	and, moreover, the $*$-representation $\widetilde{M}^\alpha_{x_0}\colon \Cc_0(X)\to\Bc(L^2(A,\tau))$ takes the form 
	$\widetilde{M}^\alpha_{x_0}=\ev_{x_0}\colon \Cc_0(X)\to\CC$. 
	Therefore one can write 
	$$\widetilde{M}^\alpha_x\rtimes_\alpha\Ind_{A_{x_0}}^A(\tau\vert_{A_{x_0}}) 
	\colon \Cc_0(X)\rtimes_\alpha A\to\CC$$
	and $(\widetilde{M}^\alpha_x\rtimes\Ind_{A_x}^A(\tau\vert_{A_{x_0}}))(f)=\int\limits_A\ev_{x_0}(f(a))\tau(a)\de a$ for every $f\in L^1(A,\Cg)$ by \eqref{integrated}. 
	In particular, one has a $*$-isomorphism $(\Cc_0(X)\rtimes_\alpha A)/\Phi(x_0,\tau)\simeq\CC$ for every $\tau\in\widehat{A}$. 
\end{remark}

\begin{lemma}\label{param3}
	In the setting of Lemma~\ref{param1}, the following assertions hold.  
	\begin{enumerate}[{\rm(i)}] 
		\item\label{param3_item1} 
		The quotient mapping 
		$$q\colon X\times\widehat{A}\to (X\times\widehat{A})/\sim,\quad 
		q(x,\tau):=[(x,\tau)]$$
		is open. 
		
		\item\label{param3_item2}  
		If $\lim\limits_{n\in\NN}x_n=x$ in $X$ and  $\lim\limits_{n\in\NN}\tau_n=\tau$ in $\widehat{A}$, 
		then 
		$[(x,\tau)]\in\liminf\limits_{n\in\NN}\{[(x_n,\tau_n)]\}$ in $(X\times\widehat{A})/\sim$. 
		\item\label{param3_item3}  
		If $[(x,\tau)]\in\liminf\limits_{n\in\NN}\{[(x_n,\tau_n)]\}$ in $(X\times\widehat{A})/\sim$, then there exists a sequence of natural numbers $n_1<n_2<\cdots$ such that for every $k\in\NN$ there exist $x'_k\in X$ and $\tau'_k\in\widehat{A}$ with $[(x_{n_k},\tau_{n_k})]=[(x'_k,\tau'_k)]$ 
		and moreover $\lim\limits_{k\in\NN}x'_k=x$ and  $\lim\limits_{k\in\NN}\tau'_k=\tau$. 
	\end{enumerate}
\end{lemma}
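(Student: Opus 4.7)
The plan is to derive all three items by transporting information through the homeomorphism $\widetilde{\Phi}$ from Lemma~\ref{param1}\eqref{param1_item4}, and then invoking Lemma~\ref{T1} to handle the liminf/limsup bookkeeping in part~\eqref{param3_item3}. Throughout, I would use the assumed second countability of $X$ and $A$ (hence of $\widehat{A}$ and of $X\times\widehat{A}$, and, because $q$ will turn out to be a continuous open surjection, of $(X\times\widehat{A})/\sim$ as well) in order to secure the first countability needed for sequence-based arguments.

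For \eqref{param3_item1}, my approach is to factor $\Phi=\widetilde{\Phi}\circ q$ and exploit that $\Phi$ is open by Lemma~\ref{param1}\eqref{param1_item2} together with the continuity of $\widetilde{\Phi}^{-1}$. Given any open $U\subseteq X\times\widehat{A}$, the set $\Phi(U)$ is open in $\Prim(C^*(X\rtimes_\alpha A))$, hence $q(U)=\widetilde{\Phi}^{-1}(\Phi(U))$ is open in the quotient.

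Part \eqref{param3_item2} is essentially just continuity of $q$: the hypotheses $x_n\to x$ in $X$ and $\tau_n\to\tau$ in $\widehat{A}$ give $(x_n,\tau_n)\to(x,\tau)$, so $[(x_n,\tau_n)]=q(x_n,\tau_n)\to q(x,\tau)=[(x,\tau)]$, and unpacking Definition~\ref{deflim} this convergence is exactly the assertion $[(x,\tau)]\in\liminf\limits_{n\in\NN}\{[(x_n,\tau_n)]\}$.

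For \eqref{param3_item3}, the plan is to apply Lemma~\ref{T1} to the map $q$ at the point $y:=[(x,\tau)]$. By \eqref{param3_item1} the map $q$ is open, so condition \eqref{T1_item1} of Lemma~\ref{T1} is satisfied, which, together with the first countability discussed above, forces condition \eqref{T1_item2} to hold as well. Applied to the sequence $y_n:=[(x_n,\tau_n)]$, the hypothesis $[(x,\tau)]\in\liminf\limits_{n\in\NN}\{y_n\}$ then yields $(x,\tau)\in q^{-1}(y)\subseteq\limsup\limits_{n\to\infty}q^{-1}(y_n)$. Unwinding the definitions of $\limsup$ and $\liminf$ produces indices $n_1<n_2<\cdots$ and elements $(x'_k,\tau'_k)\in q^{-1}(y_{n_k})$ — equivalently, $[(x'_k,\tau'_k)]=[(x_{n_k},\tau_{n_k})]$ — with $(x'_k,\tau'_k)\to(x,\tau)$ in $X\times\widehat{A}$, which is precisely the statement. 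The only step that I expect to require a little care is first countability of the quotient; I would handle this by noting that an open continuous surjection from a second countable space has second countable image, so no obstruction arises.
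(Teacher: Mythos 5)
Your proposal is correct and follows essentially the same route as the paper: item (i) from the openness of $\Phi$ and the homeomorphism $\widetilde{\Phi}$ of Lemma~\ref{param1}, item (ii) from continuity of $q$, and item (iii) by feeding the openness from (i) into Lemma~\ref{T1} and unwinding Definition~\ref{deflim}; your extra remark on first countability of the quotient (via second countability being preserved by continuous open surjections) is a legitimate detail the paper leaves implicit. The only nitpick is that the openness of $q(U)=\widetilde{\Phi}^{-1}(\Phi(U))$ uses the continuity of $\widetilde{\Phi}$ (equivalently, openness of $\widetilde{\Phi}^{-1}$) rather than the continuity of $\widetilde{\Phi}^{-1}$, but since $\widetilde{\Phi}$ is a homeomorphism this is immaterial.
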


\begin{proof}
	\eqref{param3_item1} 
	This follows by Lemma~\ref{param1} (\eqref{param1_item2}--\eqref{param1_item4}). 
	
	\eqref{param3_item2} 
	This holds true since the quotient mapping $q$ is continuous. 
	
	\eqref{param3_item3}  
	This assertion follows by \eqref{param3_item1} along with Lemma~\ref{T1}.
\end{proof}

\subsection{Parameterization of $\Prim\,C^*(G_\theta)$}

Here we fix $\theta\in\RR\setminus\QQ$ and we specialize Lemma~\ref{param1} for $A=(\RR,+)$, $X=\CC^2$, and 
$$\alpha\colon A\times X\to X,\quad (t,x)\mapsto \ee^{tD_\theta}x$$
where $D_\theta$ is given by \eqref{Dtheta}. 
We also perform the identification $\RR\simeq\widehat{A}$ via the duality pairing 
$$\RR\times \RR\to\TT,\quad (\tau,t)\mapsto\ee^{2\pi\ie \tau t}.$$
It follows that for arbitrary 
$x=\begin{pmatrix} z_1\\ z_2\end{pmatrix}\in \CC^2= X$ and $\tau\in\RR\simeq\widehat{A}$
their corresponding isotropy group $A_x\subseteq\RR$ and equivalence class $[(x,\tau)]\in(X\times\widehat{A})/\sim$ can be summarized like this
\begin{equation}\label{tab}
\text{
	\begin{tabular}{ | c | c | c |}
	\hline
	$x=\begin{pmatrix} z_1\\ z_2\end{pmatrix}$, $\tau\in\RR$ 	& $A_x$ & $[(x,\tau)]\subseteq\CC^2\times\RR$  \\ 
	\hline
	$z_1z_2\ne0$  	& $\{0\}$ & $\bigl((r_1\TT)\times(r_2\TT)\bigr)\times\RR$\\
	\hline
	$z_1\ne0=z_2$    	& $\ZZ$ &  $\bigl((r_1\TT)\times\{0\}\bigr)\times(\tau+\ZZ)$\\ 
	\hline
	$z_1=0\ne z_2$  	& $\frac{1}{\theta}\ZZ$ &  $\bigl(\{0\}\times(r_2\TT)\bigr)\times(\tau+\theta\ZZ)$\\ 
	\hline
	$z_1=z_2=0$ & $\RR$ &  $\{(0,0,\tau)\}$ \\ 
	\hline
	\end{tabular}
}
\end{equation}
where we have denoted $r_j:=\vert z_j\vert\in[0,\infty)$, hence $r_j\TT=\{w\in\CC\mid\vert w\vert=r_j\}$ for $j=1,2$. 
We computed $[(x,\tau)]$ in \eqref{tab}, using the fact that  $\overline{A.x}=(r_1\TT)\times(r_2\TT)$ since $\theta\in\RR\setminus\QQ$. 

In order to describe the topology of $(X\times\widehat{A})/\sim$ we need the open sets 
$$\emptyset=\widetilde{D_0}\subset \widetilde{D_1}\subset \widetilde{D_2}\subset \widetilde{D_3}=X\times\widehat{A}$$
where\footnote{We denote $\CC^\times:=\CC\setminus\{0\}$.} 
$$\widetilde{D_1}:=(\CC^\times)^2\times\RR$$
and 
$$\widetilde{D_2}:=\bigl(\CC^2\setminus\{0\}\bigr)\times\RR.$$
Let 
$$q\colon X\times\widehat{A}\to (X\times\widehat{A})/\sim,\quad 
q(x,\tau):=[(x,\tau)]$$
be the canonical quotient map, whose fibers are described in the third column of~\eqref{tab}. 
We also denote $D_j:=q(\widetilde{D_j})$ and $\Gamma_j:=D_j\setminus D_{j-1}$ for $j=1,2,3$, 
and we note that $\Gamma_2$ has two connected components $\Gamma_2'$ and $\Gamma_2''$, hence $\Gamma_2=\Gamma_2'\sqcup\Gamma_2''$, where 
\begin{align*}
\Gamma_2'&:=\{[(x,\tau)]\mid x\in\CC^\times\times\{0\}\},\\
\Gamma_2''&:=\{[(x,\tau)]\mid x\in\{0\}\times\CC^\times\}. 
\end{align*}
We also need the following lemma. 

\begin{lemma}\label{sep}
	The sets $\Gamma_1$,
	$\Gamma_1\sqcup\Gamma_1'$, and $\Gamma_1\sqcup\Gamma_1''$ 
	are open in $(X\times\widehat{A})/\sim$. 
\end{lemma}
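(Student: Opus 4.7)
The plan is to pull everything back to $X\times\widehat{A}=\CC^2\times\RR$ via the quotient map $q$ and to check openness there, using the description of equivalence classes given by the table \eqref{tab}.

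First I would record that $q$ is a quotient map in the strong sense that a subset $U\subseteq(X\times\widehat{A})/{\sim}$ is open if and only if $q^{-1}(U)$ is open in $X\times\widehat{A}$. This follows because $q$ is continuous (by definition of the quotient topology, transported along the homeomorphism $\widetilde{\Phi}$ from Lemma~\ref{param1}\eqref{param1_item4}) and open (by Lemma~\ref{param3}\eqref{param3_item1}); a continuous open surjection is automatically a quotient map.

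Next I would compute the three preimages using \eqref{tab}. Reading off the third column, $q(x,\tau)\in\Gamma_1$ precisely when $x\in(\CC^\times)^2$, so
\[
q^{-1}(\Gamma_1)=(\CC^\times)^2\times\RR=\widetilde{D_1},
\]
which is open. Similarly, $q(x,\tau)\in\Gamma_2'$ iff $x\in\CC^\times\times\{0\}$, and $q(x,\tau)\in\Gamma_2''$ iff $x\in\{0\}\times\CC^\times$, so
\[
q^{-1}(\Gamma_2')=(\CC^\times\times\{0\})\times\RR,\qquad
q^{-1}(\Gamma_2'')=(\{0\}\times\CC^\times)\times\RR.
\]
Taking unions gives
\[
q^{-1}(\Gamma_1\sqcup\Gamma_2')=(\CC^\times\times\CC)\times\RR,\qquad
q^{-1}(\Gamma_1\sqcup\Gamma_2'')=(\CC\times\CC^\times)\times\RR,
\]
both of which are open in $\CC^2\times\RR$.

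Applying the quotient-map criterion from the first step, the three sets $\Gamma_1$, $\Gamma_1\sqcup\Gamma_2'$, and $\Gamma_1\sqcup\Gamma_2''$ are open in $(X\times\widehat{A})/{\sim}$. I expect no real obstacle; the only delicate point is verifying that the relevant preimages really are saturated, i.e., that the above sets in $\CC^2\times\RR$ are unions of equivalence classes—this is immediate from the third column of \eqref{tab}, since each equivalence class determines (and is determined by) whether $z_1$ and $z_2$ are zero.
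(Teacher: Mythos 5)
Your proof is correct and is essentially the paper's argument in a different packaging: the paper introduces the continuous functions $f_j([(x,\tau)])=|p_j(x)|$ and writes the sets as $f_j^{-1}\bigl((0,\infty)\bigr)$, which amounts to exactly your observation that membership in each set depends only on whether $z_1$ (resp.\ $z_2$) vanishes, verified upstairs in $X\times\widehat{A}$ via the table \eqref{tab} and the quotient/openness properties of $q$. (As you note, saturation is automatic once you compute $q^{-1}$ of a subset of the quotient, so that ``delicate point'' is not really an issue.)
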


\begin{proof}
	Let $p_1,p_2\colon\CC^2\to\CC$ be the Cartesian projections. 
	Then it is easily seen that the functions 
	\begin{equation}\label{sep_proof_eq1}
	f_1,f_2\colon (X\times\widehat{A})/\sim\ \to[0,\infty),\quad 
	f_j([(x,\tau)]):=\vert p_j(x)\vert \text{ for }j=1,2
	\end{equation}
	are well defined, continuous, and one has 
	$\Gamma_1\sqcup\Gamma_1'=f_1^{-1}(0,\infty)$ 
	and $\Gamma_1\sqcup\Gamma_1''=f_2^{-1}(0,\infty)$. 
\end{proof}

Now the topology of $(X\times\widehat{A})/\sim$ can be described as follows. 

\begin{theorem}
	\label{closed}
	A subset $F\subseteq(X\times\widehat{A})/\sim$ is closed if and only if it satisfies the following conditions: 
	\begin{enumerate}[{\rm(i)}]
		\item\label{closed_item1} The set $F\cap \Gamma_j$ is closed in the relative topology of the subset $\Gamma_j\subseteq(X\times\widehat{A})/\sim$ for $j=1,2,3$. 
		\item\label{closed_item2} 
		If $[(x_0,\tau_0)]\in \Gamma_2\cup\Gamma_3$ is an accumulation point of $F\cap\Gamma_1$, then $[(x_0,\tau)]\in F$ for all $\tau\in\RR$. 
		\item\label{closed_item3} 
		If $[(0,\tau_0)]\in \Gamma_3$ is an accumulation point of $F\cap\Gamma_2'$, then $[(0,\tau)]\in F$ for all $\tau\in\tau_0+\ZZ$. 
		\item\label{closed_item4} 
		If $[(0,\tau_0)]\in \Gamma_3$ is an accumulation point of $F\cap\Gamma_2''$, then $[(0,\tau)]\in F$ for all $\tau\in\tau_0+\theta\ZZ$. 
	\end{enumerate}
\end{theorem}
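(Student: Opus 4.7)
Note first that $(X\times\widehat A)/\sim$ is second countable, being a continuous open image of $X\times\widehat A$ (Lemma~\ref{param3}\eqref{param3_item1}), so closedness is sequential. The crucial tool throughout is Lemma~\ref{param3}\eqref{param3_item3}: it converts convergence of equivalence classes into honest convergence of representatives in $X\times\widehat A$ along a subsequence, at which point the orbit and stabiliser behaviour can be read off directly from table~\eqref{tab}.

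For necessity, assume $F$ is closed. Condition (i) is immediate from the definition of the relative topology. For (ii), let $[(x_0,\tau_0)]\in\Gamma_2\cup\Gamma_3$ be the limit of a sequence $[(x_n,\tau_n)]\in F\cap\Gamma_1$. By Lemma~\ref{param3}\eqref{param3_item3}, after passing to a subsequence we may assume $x_n\to x_0$ and $\tau_n\to\tau_0$ in $X\times\widehat A$ with $x_n\in(\CC^\times)^2$. Row~1 of \eqref{tab} says that on $\Gamma_1$ the equivalence class is independent of the $\RR$-coordinate, so for any prescribed $\tau\in\RR$ we have $[(x_n,\tau)]=[(x_n,\tau_n)]\in F$; Lemma~\ref{param3}\eqref{param3_item2} then yields $[(x_n,\tau)]\to[(x_0,\tau)]$, placing $[(x_0,\tau)]$ in $F$. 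Conditions (iii) and (iv) are proved by exactly the same lifting, now shifting $\tau_n$ by an arbitrary element of $\ZZ$ (respectively $\theta\ZZ$), which by rows 2 and 3 of \eqref{tab} leaves the equivalence class unchanged while realizing every element of $\tau_0+\ZZ$ (respectively $\tau_0+\theta\ZZ$) as a limit.

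For sufficiency, suppose $F$ satisfies (i)--(iv) and let $\xi_n\in F$ converge to $\xi=[(x_0,\tau_0)]$. By Lemma~\ref{param3}\eqref{param3_item3}, after passing to a subsequence, $\xi_n=[(x_n,\tau_n)]$ with $(x_n,\tau_n)\to(x_0,\tau_0)$ in $X\times\widehat A$; using the continuity of the coordinate functions $f_1,f_2$ of Lemma~\ref{sep}, pass to a further subsequence so that the whole sequence sits in one of the strata $\Gamma_1,\Gamma_2',\Gamma_2'',\Gamma_3$. A sequence with vanishing first coordinate cannot converge to a point with nonzero first coordinate, and symmetrically for the second coordinate; this leaves precisely the following scenarios to handle. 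Same-stratum convergence is settled by (i); a sequence in $\Gamma_1$ with $\xi\in\Gamma_2\cup\Gamma_3$ is settled by (ii); and a sequence in $\Gamma_2'$ or $\Gamma_2''$ with $\xi\in\Gamma_3$ is settled by (iii) or (iv), since $\tau_0\in\tau_0+\ZZ$ (respectively $\tau_0\in\tau_0+\theta\ZZ$).

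The main obstacle is bookkeeping rather than technique: three distinct ``freedom groups'' $\RR,\ZZ,\theta\ZZ$ are attached to the three non-trivial strata in~\eqref{tab}, and conditions (ii)--(iv) are engineered to reflect them exactly. Once the quotient is shown to be second countable and the lifting of convergent sequences is provided by Lemma~\ref{param3}\eqref{param3_item3}, together with the stratification made available by Lemma~\ref{sep}, the proof reduces to the finite case analysis above.
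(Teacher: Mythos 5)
Your proof is correct and follows essentially the same route as the paper: necessity by lifting convergent sequences of classes to convergent representatives via Lemma~\ref{param3}\eqref{param3_item3}, exploiting the $\RR$-, $\ZZ$-, and $\theta\ZZ$-invariance of the fibers read off from~\eqref{tab}, and pushing back down with Lemma~\ref{param3}\eqref{param3_item2}; sufficiency by the same stratified case analysis, where your pigeonhole-plus-continuity-of-$f_1,f_2$ step is exactly the content of Lemma~\ref{sep} as used in the paper's Cases 1--3. The only (welcome) addition is your explicit remark that the quotient is second countable, which the paper leaves implicit when it works with sequences of accumulation points.
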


\begin{proof}
	We first show that \eqref{closed_item1}--\eqref{closed_item2} are necessary.

	\eqref{closed_item1} This is clear. 
	
	\eqref{closed_item2} 
	Since  
	$[(x_0,\tau_0)]$ is an accumulation point of $F\cap\Gamma_1$, 
	we can find $(y_n,\sigma_n)\in\widetilde{D_1}\cap q^{-1}(F)$ for all $n\in\NN$ with $q(x_0,\tau_0)\in\lim\limits_{n\in\NN}\{q(y_n,\sigma_n)\}$. 
	Then, by Lemma~\ref{param3}\eqref{param3_item3}, 
	after selecting a suitable subsequence and relabeling, we may assume $x_0=\lim\limits_{n\in \NN}y_n$. 
	For arbitrary $\tau\in\RR$, it then follows by Lemma~\ref{param3}\eqref{param3_item2} that $q(x_0,\tau)\in\lim\limits_{n\in\NN}\{q(y_n,\tau)\}$. 
	Since $y_n\in(\CC^\times)^2$, 
	we have that $q(y_n,\tau)=q(y_n,\sigma_n)$ by~\eqref{tab}, 
	hence $q(y_n,\tau)\in F$, 
	for every $n\in\NN$, and then $q(x_0,\tau)\in F$ since $F$ is closed.

	\eqref{closed_item3} 
	Since $[(x_0,\tau_0)]\in\Gamma_3=D_3\setminus D_2$, one has $x_0=0\in\CC^2$. 
	On the other hand, if  
	$[(0,\tau_0)]=[(x_0,\tau_0)]$ is an accumulation point of $F\cap\Gamma_2'$, 
	we can select 
	$y_n\in\CC^\times\times\{0\}$ and $\sigma_n\in\RR$ 
	for every $n\in\NN$ with $q(x_0,\tau_0)\in\lim\limits_{n\in\NN}\{q(y_n,\sigma_n)\}$. 
	By Lemma~\ref{param3}\eqref{param3_item3}, 
	after selecting again a suitable subsequence and relabeling, we may assume $\lim\limits_{n\in \NN}y_n=0$ 
	and $\lim\limits_{n\in \NN}\sigma_n=\tau_0$. 
	For arbitrary $k\in\ZZ$, it then follows by Lemma~\ref{param3}\eqref{param3_item2} that $q(0,\tau_0+k)\in\lim\limits_{n\in\NN}\{q(y_n,\sigma_n+k)\}$. 
	Since $y_n\in\CC^\times\times\{0\}$, 
	one has $q(y_n,\sigma_n+k)=q(y_n,\sigma_n)$ by~\eqref{tab}, 
	hence $q(y_n,\sigma_n+k)\in F$, 
	for every $n\in\NN$, and then $q(0,\tau_0+k)\in F$, since $F$ is closed. 
	
	\eqref{closed_item4} This is similar to \eqref{closed_item3}, and therefore we omit the details. 
	
	Conversely, we now assume that $F$ satisfies \eqref{closed_item1}--\eqref{closed_item4} and we prove that $F$ is closed. 
	To this end we must check that if $[(x_0,\tau_0)]\in(X\times\widehat{A})/\sim$ is an accumulation point of $F$, then $[(x_0,\tau_0)]\in F$. 
	One has the partition 
	\begin{equation}
	\label{closed_proof_eq1}
	(X\times\widehat{A})/\sim\ =\Gamma_1\sqcup(\Gamma_2'\sqcup\Gamma_2'')\sqcup\Gamma_3
	\end{equation}
	and we discuss separately the cases that may occur. 
	
	Case 1: $[(x_0,\tau_0)]\in \Gamma_1$. 
	Here we note that $\Gamma_1=D_1$, and $D_1\subseteq(X\times\widehat{A})/\sim$ is an open subset. 
	Therefore, since $[(x_0,\tau_0)]$ is an accumulation point of $F$, 
	then $[(x_0,\tau_0)]$ is actually an accumulation point of $F\cap \Gamma_1$, 
	hence $[(x_0,\tau_0)]\in F\cap \Gamma_1$ by~\eqref{closed_item1}. 
	
	Case 2: $[(x_0,\tau_0)]\in \Gamma_2$. 
	It follows by \eqref{closed_proof_eq1} and Lemma~\ref{sep} that $\Gamma_3$ is closed, hence 
	$[(x_0,\tau_0)]$ cannot be an accumulation point of $F\cap\Gamma_3$. 
	Therefore $[(x_0,\tau_0)]$ is an accumulation point of $F\cap (\Gamma_1\cup\Gamma_2)$. 
	Now, if $[(x_0,\tau_0)]$ is an accumulation point of $F\cap \Gamma_1$, then 
	$[(x_0,\tau_0)]\in F$ by~\eqref{closed_item2}, 
	while if $[(x_0,\tau_0)]$ is an accumulation point of $F\cap \Gamma_2$, then 
	$[(x_0,\tau_0)]\in F$ by~\eqref{closed_item1}. 
	
	Case 3: $[(x_0,\tau_0)]\in \Gamma_3$, that is, $x_0=0$.  
	If $[(x_0,\tau_0)]$ is an accumulation point of $F\cap \Gamma_1$, then 
	$[(x_0,\tau_0)]\in F$ by~\eqref{closed_item2}, 
	while if $[(x_0,\tau_0)]$ is an accumulation point of $F\cap \Gamma_3$, then 
	$[(x_0,\tau_0)]\in F$ by~\eqref{closed_item1}. 
	We may thus assume that $[(x_0,\tau_0)]$ is an accumulation point of $F\cap \Gamma_2$,  
	and then $[(x_0,\tau_0)]\in F$ by~\eqref{closed_item3}--\eqref{closed_item4}. 
	This completes the proof. 
\end{proof}

\begin{remark}
	\normalfont 
	The method of proof of the fact that \eqref{closed_item3}--\eqref{closed_item4} in Proposition~\ref{closed} are sufficient for closedness of $F$, 
	that method carries over to a more general setting and leads to the following fact: 
	
	Let $Y$ be a 1st countable topological space with an increasing family of open subsets 
	$$\emptyset =D_0\subseteq D_1\subseteq\cdots \subseteq D_m=Y.$$
	For $j=1,\dots,m$ 
	denote $\Gamma_j:=D_j\setminus D_{j-1}$. 
	Then a subset $F\subseteq Y$ is closed if and only if the following conditions are satisfied: 
	\begin{enumerate}[{\rm(a)}]
		\item The set $F\cap \Gamma_j$ is closed in the relative topology of $\Gamma_j$ for $j=1,\dots,m$. 
		\item\label{b} If $1\le j_1<j_2\le m$, and $y\in\Gamma_{j_2}$ is an accumulation point of $F\cap \Gamma_{j_1}$, then $y\in F$. 
	\end{enumerate}
	Thus the main point of Proposition~\ref{closed} is that every closed subset $F\subseteq(X\times\widehat{A})/\sim$ satisfies the conditions \eqref{closed_item2}--\eqref{closed_item4}, which are much sharper than \eqref{b} above.  
\end{remark}

\begin{corollary}\label{closed_cor}
	The set $\Gamma_1$ is the largest open subset of $(X\times\widehat{A})/\sim$ whose relative topology is Hausdorff. 
	The set $\Gamma_2$ is the largest open subset of $((X\times\widehat{A})/\sim)\setminus\Gamma_1$ whose relative topology is Hausdorff, and $\Gamma_2'$ and $\Gamma_2''$ are the connected components of $\Gamma_2$. 
\end{corollary}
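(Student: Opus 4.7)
The plan is to verify the two maximality statements and the identification of connected components, using the topology computed in Theorem~\ref{closed} together with the separation properties recorded in Lemma~\ref{sep}.

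For the first assertion about $\Gamma_1$, openness comes immediately from Lemma~\ref{sep}, and Hausdorffness is a consequence of the continuous functions $f_1,f_2$ introduced in the proof of Lemma~\ref{sep}: by inspection of the equivalence classes listed in~\eqref{tab}, the pair $(f_1,f_2)\colon\Gamma_1\to(0,\infty)^2$ is injective as well as continuous, giving a continuous injection of $\Gamma_1$ into a Hausdorff space. For the maximality, I would start with any open $U\subseteq Y:=(X\times\widehat A)/\sim$ containing a point $[(x_0,\tau_0)]\notin\Gamma_1$. Since $q^{-1}(U)$ is open and contains the equivalence class of $(x_0,\tau_0)$, it contains a product neighborhood $B\times I\ni(x_0,\tau_0)$. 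In each of the cases $[(x_0,\tau_0)]\in\Gamma_2'$, $\Gamma_2''$, or $\Gamma_3$, one reads off from~\eqref{tab} that picking $\tau_1\in I$ with $\tau_1\ne\tau_0$ (and, in the $\Gamma_2'$ case, $\tau_1-\tau_0\notin\ZZ$, similarly in the other cases) yields a second point $[(x_0,\tau_1)]\in U$ distinct from $[(x_0,\tau_0)]$. The non-separation of these two points is witnessed by a sequence $[(y_n,\tau_0)]\in\Gamma_1$ with $y_n\to x_0$ chosen so that its equivalence class does not depend on the second coordinate: $y_n=(z_1,1/n)$ when $x_0=(z_1,0)\in\Gamma_2'$ (so $A_{y_n}=\{0\}$ and $[(y_n,\tau_0)]=[(y_n,\tau_1)]$), with obvious analogues in the other strata, for instance $y_n=(1/n,1/n)$ when $x_0=0$. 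By Lemma~\ref{param3}\eqref{param3_item2} this single sequence converges both to $[(x_0,\tau_0)]$ and to $[(x_0,\tau_1)]$, which are therefore inseparable in $U$.

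For the second assertion, openness of $\Gamma_2$ in $Y\setminus\Gamma_1$ again follows from Lemma~\ref{sep}, since $\Gamma_1\sqcup\Gamma_2'$ and $\Gamma_1\sqcup\Gamma_2''$ are open in $Y$. Hausdorffness of $\Gamma_2$ is obtained by showing $q\colon q^{-1}(\Gamma_2')\to\Gamma_2'$ and its analogue for $\Gamma_2''$ are topological quotient maps and identifying the quotients with $(0,\infty)\times(\RR/\ZZ)$ and $(0,\infty)\times(\RR/\theta\ZZ)$ respectively, both Hausdorff; as $\Gamma_2=\Gamma_2'\sqcup\Gamma_2''$ and each is open in $\Gamma_2$, the disjoint union is Hausdorff. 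For the maximality, given an open $U'\subseteq Y\setminus\Gamma_1$ meeting $\Gamma_3$ at a point $[(0,\tau_0)]$, I lift to an open $V\subseteq Y$ with $U'=V\cap(Y\setminus\Gamma_1)$, extract a product neighborhood $B\times I\subseteq q^{-1}(V)$ around $((0,0),\tau_0)$, and use the $\sim$-saturation of $B\times I$ to produce further points in $U'$. The approximating sequences must now live in $\Gamma_2'\cup\Gamma_2''$ rather than in $\Gamma_1$; the natural candidates are $[((1/n,0),\tau_0)]\in\Gamma_2'$ and $[((0,1/n),\tau_0)]\in\Gamma_2''$, and the coincidences $[((1/n,0),\tau_0)]=[((1/n,0),\tau_0+k)]$ for $k\in\ZZ$ and $[((0,1/n),\tau_0)]=[((0,1/n),\tau_0+m\theta)]$ for $m\in\ZZ$ (read off from $A_{(1/n,0)}=\ZZ$ and $A_{(0,1/n)}=\tfrac1\theta\ZZ$ in~\eqref{tab}) produce inseparable pairs in $U'$ via Lemma~\ref{param3}\eqref{param3_item2}.

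Finally the identification of connected components of $\Gamma_2$ is immediate: Lemma~\ref{sep} gives that $\Gamma_2'$ and $\Gamma_2''$ are both open in $Y\setminus\Gamma_1$, hence clopen in $\Gamma_2=\Gamma_2'\sqcup\Gamma_2''$, and each is connected (as a product of connected factors under the above identifications).

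The main obstacle I expect is the maximality step for $\Gamma_2$, because the sequences in $\Gamma_1$ that drove the maximality argument for $\Gamma_1$ are no longer available once we pass to the subspace $Y\setminus\Gamma_1$; the approximation has to be carried out entirely within $\Gamma_2'\cup\Gamma_2''$, and one must exploit the arithmetic of the stabilizers $\ZZ$ and $\tfrac1\theta\ZZ$ (together with the irrationality of $\theta$) to arrange that the two $\Gamma_3$-points whose non-separation one wants to witness actually both lie in the given open set $U'$.
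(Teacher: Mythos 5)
Your argument for the first assertion and for the connected components of $\Gamma_2$ is correct: since $q^{-1}(U)$ is open in the product $\CC^2\times\RR$, it contains a full box $B\times I$, which forces the second point $[(x_0,\tau_1)]$ into $U$, and the $\tau$-independence of the classes $[(y_n,\cdot)]$ for $y_n\in(\CC^\times)^2$ produces the common limit via Lemma~\ref{param3}\eqref{param3_item2}; the identification of $\Gamma_2'$ and $\Gamma_2''$ as clopen connected pieces of $\Gamma_2$ is also fine. (The paper's own proof is a one-line citation of Theorem~\ref{closed} and Lemma~\ref{sep}, so there is no detailed argument to compare with.)

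The maximality step for $\Gamma_2$ is where the proposal breaks down, and the obstacle you flag at the end is not merely technical: it cannot be overcome. Write $Y:=(X\times\widehat{A})/\sim$. An open $U'\subseteq Y\setminus\Gamma_1$ containing $[(0,\tau_0)]$ pulls back only to a \emph{relatively} open subset of the cross $\{(z_1,z_2)\in\CC^2\mid z_1z_2=0\}\times\RR$, and saturating a small box there yields no points of $\Gamma_3$ beyond $\{[(0,\sigma)]\mid\sigma\in I\}$; the points $[(0,\tau_0+k)]$ and $[(0,\tau_0+m\theta)]$ with $k,m\ne0$, which are the only $\Gamma_3$-points not separated from $[(0,\tau_0)]$ inside $Y\setminus\Gamma_1$, need not lie in $U'$ at all. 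Indeed, if $\sigma_1-\sigma_2\notin\ZZ\cup\theta\ZZ$ then $[(0,\sigma_1)]$ and $[(0,\sigma_2)]$ \emph{are} separated in $Y\setminus\Gamma_1$: choose short open intervals $J_i\ni\sigma_i$ with $(J_1-J_2)\cap(\ZZ\cup\theta\ZZ)=\emptyset$ and let $W_i$ be the $q$-image of $\{((w,0),\tau)\mid 0<\vert w\vert<1,\ \tau\in J_i+\ZZ\}\cup\{((0,w),\tau)\mid 0<\vert w\vert<1,\ \tau\in J_i+\theta\ZZ\}\cup(\{0\}\times J_i)$; these preimages are open, saturated and disjoint in the cross, so the $W_i$ are disjoint open neighbourhoods. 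Consequently, for $0<2\epsilon<\min(1,\vert\theta\vert)$ the set $U':=\Gamma_2\cup\{[(0,\sigma)]\mid\vert\sigma-\tau_0\vert<\epsilon\}$ is open in $Y\setminus\Gamma_1$ (its complement has closed saturated preimage $\{0\}\times(\RR\setminus(\tau_0-\epsilon,\tau_0+\epsilon))$) and is Hausdorff, yet it strictly contains $\Gamma_2$; moreover the union of all such $U'$ is $Y\setminus\Gamma_1$, which is not Hausdorff, so a largest open Hausdorff subset of $Y\setminus\Gamma_1$ does not exist. What your $f_1,f_2$ and stabilizer analysis actually establishes is that $\Gamma_2$ is precisely the set of points of $Y\setminus\Gamma_1$ admitting disjoint neighbourhoods from every other point of $Y\setminus\Gamma_1$; for $Y$ itself this set of separated points coincides with the largest open Hausdorff subset (because there every open set contains a full product box), which is why the first assertion does go through. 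The second assertion should be reformulated in those terms rather than proved as stated.
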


\begin{proof}
	These assertions follow by Proposition~\ref{closed} and Lemma~\ref{sep}. 
\end{proof}

\begin{corollary}\label{limit_sets}
We have the partition $\Gamma_1\sqcup(\Gamma_2'\sqcup\Gamma_2'')\sqcup\Gamma_3=(X\times\widehat{A})/\sim$ and the homeomorphisms 
\allowdisplaybreaks
\begin{align*}
\Psi_1 &\colon (0,\infty)^2\to \Gamma_1, 
\quad (r_1,r_2)\mapsto \bigl((r_1\TT)\times(r_2\TT)\bigr)\times\RR \\
\Psi_2' &\colon (0,\infty)\times(\RR/\ZZ)\to \Gamma_2', 
\quad (r_1,\tau+\ZZ)\mapsto \bigl((r_1\TT)\times\{0\}\bigr)\times(\tau+\ZZ) \\
\Psi_2'' &\colon (0,\infty)\times(\RR/\theta\ZZ)\to \Gamma_2'', 
\quad (r_2,\tau+\theta\ZZ)\mapsto \bigl(\{0\}\times(r_2\TT)\bigr)\times(\tau+\theta\ZZ), \\
\Psi_3&\colon \RR\to\Gamma_3,
\quad \tau\mapsto\{(0,0,\tau)\}.
\end{align*}
Moreover, 
\allowdisplaybreaks
\begin{align*}
\Lc'((X\times\widehat{A})/\sim)
=
& 
\{\Psi_1(r_1,r_2)\mid (r_1,r_2)\in(0,\infty)^2\} \\
&\sqcup\{\Psi_2'(\{r_1\}\times(\RR/\ZZ))\mid r_1\in(0,\infty)\} \\
&\sqcup\{\Psi_2''(\{r_2\}\times(\RR/\theta\ZZ))\mid r_2\in(0,\infty)\} \\
&\sqcup\{\Gamma_3\} \\
&\sqcup(\{\Psi_3(\tau+\ZZ)\mid \tau\in\RR\}
\cup\{\Psi_3(\tau+\theta\ZZ)\mid \tau\in\RR\}) \\
&\subseteq\Cl((X\times\widehat{A})/\sim) 
\end{align*}
and 
\begin{align*}
\Mc\Lc((X\times\widehat{A})/\sim)
=
& 
\{\Psi_1(r_1,r_2)\mid (r_1,r_2)\in(0,\infty)^2\} \\
&\sqcup\{\Psi_2'(\{r_1\}\times(\RR/\ZZ))\mid r_1\in(0,\infty)\} \\
&\sqcup\{\Psi_2''(\{r_2\}\times(\RR/\theta\ZZ))\mid r_2\in(0,\infty)\} \\
&\sqcup\{\Gamma_3\} \\
&\subseteq\Cl((X\times\widehat{A})/\sim). 
\end{align*}
Moreover the mapping
$$\Lambda\colon [0,\infty)^2\to\Mc\Lc((X\times\widehat{A})/\hskip-2pt\sim),\ 
\Lambda(r_1,r_2):=
\begin{cases}
\Psi_1(r_1,r_2) & \text{if }r_1r_2\ne 0,\\
\Psi_2'(\{r_1\}\times(\RR/\ZZ)) & \text{if }r_1\ne 0=r_2,\\
\Psi_2''(\{r_2\}\times(\RR/\theta\ZZ)) & \text{if }r_1=0\ne r_2,\\
\Gamma_3 & \text{if }r_1=r_2=0,
\end{cases}
$$
is a homeomorphism. 
\end{corollary}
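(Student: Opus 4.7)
The plan is to first establish the partition and the four stratum parameterizations $\Psi_j$ directly from~\eqref{tab}, then identify $\Lc'$ by exhibiting explicit limit sequences and invoking Theorem~\ref{closed} to rule out others, extract $\Mc\Lc$ by a short maximality check, and finally verify that $\Lambda$ is a homeomorphism by controlling the three boundary transitions in $[0,\infty)^2$.

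The decomposition $\Gamma_1\sqcup\Gamma_2'\sqcup\Gamma_2''\sqcup\Gamma_3=(X\times\widehat{A})/\hskip-2pt\sim$ is a direct readout of the four rows of~\eqref{tab}, and each $\Psi_j$ is the tautological bijection with its row. For continuity of $\Psi_j$ and its inverse, combine the openness of $q$ (Lemma~\ref{param3}(i)) with the continuous functions $f_1,f_2$ of Lemma~\ref{sep} to recover the moduli $r_1,r_2$; on $\Gamma_2'$ and $\Gamma_2''$ the character coordinate $\tau$ modulo the stabilizer subgroup is recovered from the fibers of $q$ described by the isotropy rows of~\eqref{tab}.

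For $\Lc'$, each listed element is produced as a Fell-limit of an explicit sequence. The singletons $\{\Psi_1(r_1,r_2)\}$ are limits of constant sequences. A $\Gamma_2'$-fiber $\Psi_2'(\{r_1\}\times(\RR/\ZZ))$ arises as the Fell-limit of $\Psi_1(r_1,r_{2,n})$ with $r_{2,n}\to 0$: by clause (ii) of Theorem~\ref{closed} every $[(x,\tau)]$ with $|p_1(x)|=r_1$ and $p_2(x)=0$ must lie in the limit, while Theorem~\ref{closed}(i) prevents any further points from entering; the $\Gamma_2''$-fibers are symmetric. Similarly, $\Gamma_3$ appears as the Fell-limit of $\Psi_1(r_{1,n},r_{2,n})$ with $(r_{1,n},r_{2,n})\to(0,0)$, again by Theorem~\ref{closed}(ii). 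The cosets $\Psi_3(\tau+\ZZ)$ and $\Psi_3(\tau+\theta\ZZ)$ arise as Fell-limits of sequences in $\Gamma_2'$ (resp.\ $\Gamma_2''$) with vanishing modulus and fixed character representative $\sigma=\tau$, by clauses (iii)--(iv) of Theorem~\ref{closed}. Conversely, Lemma~\ref{param3}(iii) lifts any Fell-convergent singleton net in $(X\times\widehat{A})/\hskip-2pt\sim$ to a converging sequence of representatives in $X\times\widehat{A}$; after passing to a subsequence one may assume the terms lie in a single stratum, and Theorem~\ref{closed} then forces the limit into one of the listed shapes.

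To pass from $\Lc'$ to $\Mc\Lc$ one eliminates $\Psi_3(\tau+\ZZ)$ and $\Psi_3(\tau+\theta\ZZ)$ as proper subsets of $\Gamma_3\in\Lc'$, and notes that the remaining candidates are pairwise disjoint or live in strata carrying distinct non-trivial modulus coordinates, so none is contained in any other. For the homeomorphism $\Lambda$, bijectivity is built in; continuity reduces to three Fell-convergence statements at the boundary of $[0,\infty)^2$: (a) $\Psi_1(r_1,r_2)\to\Psi_2'(\{r_1^0\}\times(\RR/\ZZ))$ as $(r_1,r_2)\to(r_1^0,0)$ with $r_1^0>0$, (b) the symmetric statement for $r_2^0>0$, and (c) $\Lambda(r_1,r_2)\to\Gamma_3$ as $(r_1,r_2)\to(0,0)$ through any of the three other strata, each of which follows from the same analysis as above. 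Since $[0,\infty)^2$ is locally compact Hausdorff and $\Mc\Lc$ inherits a Hausdorff topology from $\tau_s$ (cf.\ Lemma~\ref{primals}), properness of $\Lambda$ then upgrades the continuous bijection to a homeomorphism. The main obstacle I anticipate is assertion (c) together with the properness check: as the modulus parameters collapse along different strata toward the origin, one must verify that the Fell-limit always equals the full $\Gamma_3$, neither losing any character coset nor acquiring a point outside $\Gamma_3$, which requires a careful invocation of clauses (ii)--(iv) of Theorem~\ref{closed} rather than merely clause (i).
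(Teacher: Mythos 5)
Your overall route coincides with the paper's (very terse) one: read the partition and the maps $\Psi_j$ off the table \eqref{tab}, compute limit sets of explicit sequences using Theorem~\ref{closed} and Lemma~\ref{param3}, extract $\Mc\Lc$, and check $\Lambda$ by analysing Fell convergence. Your treatment of $\Lambda$ (continuous bijection $+$ properness $+$ Hausdorff, hence metrizable, target) is a legitimate mild variant of the paper's appeal to Flachsmeyer's criterion $A=\lim A_n\iff A=\liminf A_n=\limsup A_n$, and your computations of the individual limit sets (the $\Gamma_2$-fibres as limits of $\Psi_1(r_1,r_{2,n})$, the cosets $\Psi_3(\tau+\ZZ)$, $\Psi_3(\tau+\theta\ZZ)$, and $\Gamma_3$) are correct and give the inclusion ``$\supseteq$'' in the description of $\Lc'$, as well as the fact that each set listed in $\Mc\Lc$ really is a limit set.

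The genuine gap is in your converse inclusion for $\Lc'$. First, the reduction ``after passing to a subsequence one may assume the terms lie in a single stratum'' is not sound: the set of limits of a net can only grow when you pass to a subnet, so knowing that every pure subsequence has a limit set of one of the listed shapes does not constrain the limit set of the original sequence, which is the \emph{intersection} of the limit sets of the cofinal pure subsequences. Such intersections need not be on the list: interleaving $\Psi_2'(1/n,\tau+\ZZ)$ with $\Psi_2''(1/n,\sigma+\theta\ZZ)$ produces the limit set $\Psi_3\bigl((\tau+\ZZ)\cap(\sigma+\theta\ZZ)\bigr)$, which is a single point of $\Gamma_3$ or empty since $\theta\notin\QQ$. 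Second, even single-stratum nets escape the list: the space is $T_1$ (all primitive quotients are simple), so a constant net at a point of $\Gamma_2'$, $\Gamma_2''$ or $\Gamma_3$ has limit set a singleton not appearing on the right-hand side; likewise a sequence in $\Gamma_2'$ with $r_{1,n}\to r_1>0$ converges to exactly one point of $\Gamma_2'$. More structurally, $\Lc'$ is closed under passing to nonempty closed subsets (equivalently, every proper ideal containing a primal ideal is primal), so it must contain, e.g., $\Psi_3(K)$ for \emph{every} nonempty closed $K\subseteq\RR$. Thus only the inclusion ``$\supseteq$'' of the displayed formula for $\Lc'$ can be established; your argument glosses over this rather than detecting it. None of this damages the parts that matter downstream: the description of $\Mc\Lc$ and the homeomorphism $\Lambda$ survive, because maximality of each listed set can be verified directly — the continuous moduli functions $f_1,f_2$ of Lemma~\ref{sep} show that any limit set meeting $\Lambda(r_1,r_2)$ consists of points with moduli exactly $(r_1,r_2)$, hence is contained in $\Lambda(r_1,r_2)$ — but you should either restrict your claim about $\Lc'$ to the inclusion actually needed, or enlarge the right-hand side to all nonempty closed subsets of the sets $\Lambda(r_1,r_2)$.
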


\begin{proof}
The fact that the mappings $\Psi_1,\Psi_2',\Psi_2'',\Psi_3$ are homeomorphisms follows by Lemma~\ref{sep} and its preceding discussion, 
along with Lemma~\ref{param3}. 

The description of the set of nonempty closed limit sets $\Lc'((X\times\widehat{A})/\sim)$ follows by the description of the topology of $(X\times\widehat{A})/\sim$ given in Proposition~\ref{closed}. 
Then the set of the maximal limit sets $\Mc\Lc((X\times\widehat{A})/\sim)$ can easily be infered from the description of $\Lc'((X\times\widehat{A})/\sim)$. 
We also have 
The fact that the mapping $\Lambda$ is a homeomorphism follows 
by the fact that $A=\lim\limits_{n\to\infty}A_n$ in $\Cl((X\times\widehat{A})/\sim)$ with respect to the Fell topology $\tau_s$ 
if and only if $A=\liminf\limits_{n\to\infty}A_n=\limsup\limits_{n\to\infty}A_n$, 
cf. Definition~\ref{deflim} and \cite[2.1, 3.1, and 3.8]{Fl63}.  
(See also \cite{NS96}.)
\end{proof}

\begin{corollary}\label{Glimm}
If $x=\begin{pmatrix}
z_1 \\ z_2
\end{pmatrix},y=\begin{pmatrix}
w_1 \\ w_2
\end{pmatrix}\in \CC^2=X$ and $\tau,\sigma\in\RR\simeq\widehat{A}$,  
then the following assertions are equivalent: 
\begin{enumerate}[{\rm(i)}]
	\item\label{Glimm_item1} 
	For every $f\in\Ccb((X\times\widehat{A})/\sim)$ we have 
	$f([(x,\tau)])=f([(y,\sigma)])$. 
	\item\label{Glimm_item2} 
	We have $\vert z_j\vert=\vert w_j\vert$ for $j=1,2$. 
	\item\label{Glimm_item3} There exists $L\in \Mc\Lc((X\times\widehat{A})/\sim)$ with $[(x,\tau)],[(y,\sigma)]\in L$. 
	\item\label{Glimm_item4} The points $[(x,\tau)],[(y,\sigma)]\in (X\times\widehat{A})/\sim$ do not have disjoint neighbourhoods. 
\end{enumerate}
\end{corollary}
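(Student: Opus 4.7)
The plan is to prove the cycle (iv)$\Rightarrow$(i)$\Rightarrow$(ii)$\Leftrightarrow$(iii)$\Rightarrow$(iv).

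I begin with (ii)$\Leftrightarrow$(iii), which I would read off directly from Corollary~\ref{limit_sets}. Going through the four rows of the table~\eqref{tab}, each $[(x,\tau)]$ belongs to $\Lambda(\vert z_1\vert,\vert z_2\vert)$: if $z_1z_2\ne 0$ this is the single point $\Psi_1(\vert z_1\vert,\vert z_2\vert)\in\Gamma_1$; if $z_2=0\ne z_1$ it is $\Psi_2'(\vert z_1\vert,\tau+\ZZ)\in\Psi_2'(\{\vert z_1\vert\}\times(\RR/\ZZ))$; and so on. Since the four families $\Psi_1,\Psi_2',\Psi_2'',\{\Gamma_3\}$ have pairwise disjoint images and the parameterizations within each family are injective, each $[(x,\tau)]$ lies in exactly one element of $\Mc\Lc((X\times\widehat{A})/\!\sim)$, namely $\Lambda(\vert z_1\vert,\vert z_2\vert)$. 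Hence two points share an element of $\Mc\Lc$ if and only if $\vert z_j\vert=\vert w_j\vert$ for $j=1,2$.

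For (iv)$\Rightarrow$(i) I would argue by contrapositive: if some $f\in\Ccb((X\times\widehat{A})/\!\sim)$ satisfies $f([(x,\tau)])\ne f([(y,\sigma)])$, then disjoint open intervals in $\RR$ around these two values pull back to disjoint open neighborhoods, violating (iv). For (i)$\Rightarrow$(ii) I would use the continuous functions $f_1,f_2\colon(X\times\widehat{A})/\!\sim\,\to[0,\infty)$, $f_j([(x,\tau)])=\vert p_j(x)\vert$, already introduced in the proof of Lemma~\ref{sep}. Composing with the bounded continuous injection $g(t):=t/(1+t)$ of $[0,\infty)$ into $[0,1)$ yields $\tilde f_j:=g\circ f_j\in\Ccb((X\times\widehat{A})/\!\sim)$. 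Condition (i) forces $g(\vert z_j\vert)=g(\vert w_j\vert)$ and injectivity of $g$ yields (ii).

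The remaining implication (iii)$\Rightarrow$(iv) is the one that uses the specific meaning of limit sets. By the very definition of $\Lc((X\times\widehat{A})/\!\sim)$, the given $L\in\Mc\Lc$ containing both $[(x,\tau)]$ and $[(y,\sigma)]$ is the set of limit points of some net $(p_\nu)$ in $(X\times\widehat{A})/\!\sim$; in particular, every open neighborhood of $[(x,\tau)]$ as well as every open neighborhood of $[(y,\sigma)]$ must eventually contain $p_\nu$. Consequently no two such neighborhoods can be disjoint, which is precisely (iv). (If one prefers a concrete realization of the net: for $L=\Psi_2'(\{r_1\}\times(\RR/\ZZ))$, pick $b_n\in(\CC^\times)^2$ with $b_n\to(r_1,0)$ in $\CC^2$ and any $\rho_n\in\RR$; by Theorem~\ref{closed}\eqref{closed_item2} the limit set of $[(b_n,\rho_n)]$ is exactly $L$. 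Analogous choices work for the other three types of maximal limit sets, and Theorem~\ref{closed}\eqref{closed_item3}--\eqref{closed_item4} handle the $\Gamma_3$ case.)

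The main technical point is the last step: one must be comfortable with the interpretation of $\Lc$ in terms of set-theoretic limits of nets in a non-Hausdorff space, and one should verify that for each $L\in\Mc\Lc$ produced by Corollary~\ref{limit_sets} there really is such a net having exactly $L$ as its set of limit points. Once this is in place, the implication is nothing more than the universal fact that two common limits of a single net cannot be separated by disjoint open sets.
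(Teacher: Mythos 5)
Your proposal is correct and follows essentially the same route as the paper: the same continuous functions $f_j([(x,\tau)])=\vert p_j(x)\vert$ (made bounded by composing with an injection, where the paper uses $\ee^{-t}$) give (i)$\Rightarrow$(ii), the description of $\Mc\Lc$ in Corollary~\ref{limit_sets} gives (ii)$\Leftrightarrow$(iii), and (iii)$\Rightarrow$(iv) rests on the same fact that two limits of a single net cannot be separated, which you prove directly and the paper cites as \cite[Lemme 9]{Di68}. The only differences are cosmetic (the order of the cycle and the optional explicit nets realizing the maximal limit sets).
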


\begin{proof}
``\eqref{Glimm_item1}$\implies$\eqref{Glimm_item2}''
It follows by \eqref{tab} that the functions 
$$\varphi_1,\varphi_2\colon ((X\times\widehat{A})/\sim)\to[0,\infty),\quad 
\varphi_j([(x,\tau)]):=\vert z_j\vert \text{ for }j=1,2$$
are well defined, and they are also continuous since their lifts from the quotient space $(X\times\widehat{A})/\sim$ to $X\times\widehat{A}$ are clearly continuous. 
Moreover, $f_j:=\ee^{-\varphi_j}\in\Ccb((X\times\widehat{A})/\sim)$ for $j=1,2$ and, if $f_j([(x,\tau)])=f_j([(y,\sigma)])$ then clearly $\vert z_j\vert=\vert w_j\vert$ for $j=1,2$. 

``\eqref{Glimm_item2}$\implies$\eqref{Glimm_item3}'' 
This follows by the description of 
$\Mc\Lc((X\times\widehat{A})/\sim)$ in Corollary~\ref{limit_sets}. 

``\eqref{Glimm_item3}$\implies$\eqref{Glimm_item4}'' 
If 
there exists a limit set $L\in \Mc\Lc((X\times\widehat{A})/\sim)$  with the property that $[(x,\tau)],[(y,\sigma)]\in L$, 
then the points $[(x,\tau)]$ and $[(y,\sigma)]$ have no disjoint neighbourhoods by \cite[Lemme 9]{Di68}.

``\eqref{Glimm_item4}$\implies$\eqref{Glimm_item1}'' 
This is straightforward and well known. 
\end{proof}

\begin{corollary}
	\label{M} 
	For every $\theta\in\RR\setminus\QQ$ the following assertions hold: 
	\begin{enumerate}[{\rm(i)}] 
		\item\label{M_item1} We have $\Orc(C^*(G_\theta))=1$. 
		\item\label{M_item2} We have $\MinPrimal(C^*(G_\theta))=\Sub(C^*(G_\theta))=\Glimm(C^*(G_\theta))$ 
		as topological spaces, and these spaces are homeomorphic to $[0,\infty)^2$. 
		\item\label{M_item3} The $C^*$-algebra $C^*(G_\theta)$ is quasi-standard. 
	\end{enumerate}
\end{corollary}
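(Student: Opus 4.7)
The plan is to combine Williams' parameterization with the explicit descriptions already established in Corollaries~\ref{limit_sets} and~\ref{Glimm} to give a concrete model of both $\MinPrimal(C^*(G_\theta))$ and $\Glimm(C^*(G_\theta))$ as the closed quadrant $[0,\infty)^2$, and then to deduce the three assertions through the bookkeeping of Remark~\ref{quasi}.

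First I would use the Williams parameterization to identify $\Prim(C^*(G_\theta))$ with $(X\times\widehat{A})/\sim$ and then transport $\MinPrimal$ to this parameter space. By Lemma~\ref{primals}, $\MinPrimal(C^*(G_\theta))$ is homeomorphic via $\hull$ to $\Mc\Lc((X\times\widehat{A})/\sim)$, which by Corollary~\ref{limit_sets} is in turn homeomorphic, via $\Lambda$, to $[0,\infty)^2$. This supplies the target model.

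Next I would identify $\Glimm(C^*(G_\theta))$ with $\MinPrimal(C^*(G_\theta))$ as subsets of $\Id(C^*(G_\theta))$ using the equivalence $\eqref{Glimm_item1}\Leftrightarrow\eqref{Glimm_item3}$ in Corollary~\ref{Glimm}: the Glimm equivalence classes in $\Prim$ are precisely the maximal limit sets, so the two kernel images inside $\Id(C^*(G_\theta))$ agree. To compare the topologies, I would use Corollary~\ref{Glimm}\eqref{Glimm_item2} to factor the composition
$$
X\times\widehat{A}\ \longrightarrow\ (X\times\widehat{A})/\sim\ \longrightarrow\ \Glimm(C^*(G_\theta))
$$
through the moduli map $X\to[0,\infty)^2$, $(z_1,z_2)\mapsto(|z_1|,|z_2|)$. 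Both quotient maps are open (the first by Lemma~\ref{param3}\eqref{param3_item1}, the second by definition of the quotient topology $\tau_q$), and the moduli projection from $X$ onto $[0,\infty)^2$ is continuous and open; composing one sees that $(\Glimm(C^*(G_\theta)),\tau_q)$ is homeomorphic to $[0,\infty)^2$ with its usual topology, which is exactly the topology transported through $\Lambda$ onto $(\MinPrimal(C^*(G_\theta)),\tau)$.

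With this concrete model, all three assertions fall out. The equality $(\MinPrimal(C^*(G_\theta)),\tau)=(\Glimm(C^*(G_\theta)),\tau_q)$ established above gives (iii) via Remark~\ref{quasi}\eqref{quasi_item1}, which in turn forces $\Sub(C^*(G_\theta))=\MinPrimal(C^*(G_\theta))=\Glimm(C^*(G_\theta))$ as topological spaces via Remark~\ref{quasi}\eqref{quasi_item2}, giving~(ii). For (i), the equivalence $\eqref{Glimm_item1}\Leftrightarrow\eqref{Glimm_item4}$ in Corollary~\ref{Glimm} shows the relation $\sim$ on $\Prim(C^*(G_\theta))$ agrees with Glimm equivalence, so it is transitive and $\Orc(C^*(G_\theta))=1$. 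The main technical point, and the one deserving care, is the openness argument showing that the quotient topology $\tau_q$ on $\Glimm(C^*(G_\theta))$ realises the standard topology of $[0,\infty)^2$ and matches the Fell-inherited topology on $\MinPrimal(C^*(G_\theta))$; everything else is a direct assembly of the preceding results.
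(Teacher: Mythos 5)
Your overall strategy is sound and the conclusion is reached correctly, but you take a genuinely different logical route from the paper, and one step of your topological comparison needs repair.

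The paper never compares the topologies $\tau$ and $\tau_q$ directly. It first shows $\MinPrimal=\Glimm$ \emph{as sets} (Corollary~\ref{Glimm}, \eqref{Glimm_item1}$\Leftrightarrow$\eqref{Glimm_item3}) and separately that $\Mc\Lc((X\times\widehat{A})/\sim)$ is $\tau_s$-closed in $\Cl((X\times\widehat{A})/\sim)$, so that $\Mc\Lc=\Mc\Lc^s$ and hence $\MinPrimal=\Sub$; quasi-standardness then follows from the \emph{contrapositive} of Remark~\ref{quasi}\eqref{quasi_item3}, and only afterwards does Remark~\ref{quasi}\eqref{quasi_item2} deliver the topological identifications. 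You instead aim to verify $(\MinPrimal,\tau)=(\Glimm,\tau_q)$ directly and invoke Remark~\ref{quasi}\eqref{quasi_item1}; this spares you the verification that $\Mc\Lc$ is $\tau_s$-closed (you recover $\Sub=\MinPrimal$ for free from quasi-standardness), at the cost of having to control the quotient topology $\tau_q$ explicitly. Both routes are legitimate.

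The step that does not stand as written is the claim that the map $(X\times\widehat{A})/\sim\ \to\Glimm(C^*(G_\theta))$ is open ``by definition of the quotient topology $\tau_q$''. A quotient map is not open by definition, and indeed the openness of $\Prim(\A)\to\Glimm(\A)$ is essentially the content of quasi-standardness, so assuming it here is close to circular. The repair is already implicit in your own factorization: the composite $X\times\widehat{A}\to[0,\infty)^2$, $(z_1,z_2,\tau)\mapsto(\vert z_1\vert,\vert z_2\vert)$, is a continuous, \emph{open}, surjective map, hence a quotient map for the standard topology of $[0,\infty)^2$; since quotient topologies compose and $X\times\widehat{A}\to(X\times\widehat{A})/\sim$ is itself a quotient map, $\tau_q$ on $\Glimm$ (identified with $[0,\infty)^2$ via the moduli) must coincide with the standard topology. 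Openness of the intermediate map onto $\Glimm$ is then a consequence, not an input. With that correction, and after checking that the bijection $\Glimm\leftrightarrow\Mc\Lc\leftrightarrow[0,\infty)^2$ you obtain from Corollary~\ref{Glimm}\eqref{Glimm_item3} agrees with $\Lambda$ from Corollary~\ref{limit_sets} (it does, both send a class to $(\vert z_1\vert,\vert z_2\vert)$), your argument is complete; the treatment of \eqref{M_item1} via \eqref{Glimm_item1}$\Leftrightarrow$\eqref{Glimm_item4} matches the paper's intent.
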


\begin{proof}
	\eqref{M_item1} This follows by Corollary~\ref{Glimm}. 
	
	\eqref{M_item2}--\eqref{M_item3} 
	Using Lemma~\ref{primals}, the equality 
	$\MinPrimal(C^*(G_\theta))=\Glimm(C^*(G_\theta))$ (as sets) 
	follows by Corollary~\ref{Glimm}(\eqref{Glimm_item1},\eqref{Glimm_item3}). 
	Moreover, by the description of the set 
	$\Mc\Lc((X\times\widehat{A})/\sim)$ in Corollary~\ref{limit_sets},  
	it follows that $\Mc\Lc((X\times\widehat{A})/\sim)$ is a closed subset of $\Cl((X\times\widehat{A})/\sim)$, 
	that is, $\Mc\Lc((X\times\widehat{A})/\sim)=\Mc\Lc^s((X\times\widehat{A})/\sim)$, 
	and then 
	$\MinPrimal(C^*(G_\theta))=\Sub(C^*(G_\theta))$ by Lemma~\ref{primals} again. 
	
	Using Remark~\ref{quasi}\eqref{quasi_item3}, we now obtain that $C^*(G_\theta)$ is quasi-standard, 
	and then $\MinPrimal(C^*(G_\theta))=\Sub(C^*(G_\theta))=\Glimm(C^*(G_\theta))$ 
	as topological spaces by Remark~\ref{quasi}\eqref{quasi_item2}. 
	It also follows by Corollary~\ref{Glimm}\eqref{Glimm_item2} that these topological spaces are homeomorphic to $[0,\infty)^2$. 
\end{proof}

\section{The primitive quotients of $C^*(G_\theta)$}

Since $G_\theta$ is a connected and simply connected solvable Lie group whose roots are purely imaginary, the primitive quotients  $C^*(G_\theta)/\P$ for arbitrary $\P\in\Prim\,C^*(G_\theta)$ are simple $C^*$-algebras by \cite[Th. 2]{Pu73}. 
In this section we describe these simple $C^*$-algebras in terms of the Williams parameterization, and to this end we discuss separately the three parts of the partition 
\begin{equation}
\label{partition}
(X\times\widehat{A})/\sim\ =\Gamma_1\sqcup\Gamma_2\sqcup\Gamma_3. 
\end{equation}
(Compare \eqref{closed_proof_eq1}.) 

We recall from Lemma~\ref{param1}\eqref{param1_item3} that 
$$(\forall x\in X=\CC^2)(\forall \tau\in\widehat{A})\quad 
\Phi(x,\tau)=\Ker(\widetilde{M}^\alpha_x\rtimes\Ind_{A_x}^A(\tau\vert_{A_x})).$$

\begin{proposition}\label{primquot}
For every $[(x,\tau)]\in (X\times\widehat{A})/\sim$ the following assertions hold: 
\begin{enumerate}[{\rm(i)}]
	\item\label{primquot_item1}
	If $[(x,\tau)]\in\Gamma_1$ then $C^*(G_\theta)/\Phi(x,\tau)\simeq \A_\theta\otimes \Kc(L^2(\TT))$. 
	\item\label{primquot_item2}
	If $[(x,\tau)]\in\Gamma_3$ then $C^*(G_\theta)/\Phi(x,\tau)\simeq \CC$. 
	\item\label{primquot_item3}
	If $[(x,\tau)]\in\Gamma_3$ then $C^*(G_\theta)/\Phi(x,\tau)\simeq \CC$. 
\end{enumerate}
\end{proposition}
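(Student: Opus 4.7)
The plan is to treat each of the three strata in the partition \eqref{partition} separately, in every case identifying the primitive quotient with a crossed product of the form $(\Cc_0(\CC^2)/\Jg)\rtimes_{\alpha^\Jg}A\simeq\Cc(\overline{A.x})\rtimes_\alpha A$, where $\Jg:=\Ker\widetilde{M}^\alpha_x$ will turn out to be the ideal of functions in $\Cc_0(\CC^2)$ vanishing on the orbit closure $\overline{A.x}$ listed in the second column of \eqref{tab}.

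For $[(x,\tau)]\in\Gamma_3$ one has $x=0$ and $A_x=A$, so the assertion $C^*(G_\theta)/\Phi(0,\tau)\simeq\CC$ is an immediate application of Remark~\ref{param2.7}. For $[(x,\tau)]\in\Gamma_2$, i.e., $x\in(\CC^\times\times\{0\})\cup(\{0\}\times\CC^\times)$, the orbit $A.x$ is a single circle and hence already closed in $\CC^2$, so Remark~\ref{param2.6} applies directly and yields $C^*(G_\theta)/\Phi(x,\tau)\simeq\Kc(L^2(A/A_x))\simeq\Kc(L^2(\TT))$, using that $A_x\in\{\ZZ,\tfrac{1}{\theta}\ZZ\}$ and that both $\RR/\ZZ$ and $\RR/\tfrac{1}{\theta}\ZZ$ are homeomorphic to~$\TT$.

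The substantive case, and the main obstacle, is $[(x,\tau)]\in\Gamma_1$: since $\theta\notin\QQ$, the orbit $A.x$ is a dense Kronecker curve in the 2-torus $(r_1\TT)\times(r_2\TT)$, so the orbit fails to be closed and Remark~\ref{param2.6} is not directly available. To handle this I would first read off from the definition $\widetilde{M}^\alpha_x=M\circ\alpha_x^*$ in Lemma~\ref{param1} that $\xi\in\Ker\widetilde{M}^\alpha_x$ is equivalent to $\xi\circ\alpha_x\equiv 0$ on~$\RR$; by density of $A.x$ in the torus this is equivalent to $\xi|_{(r_1\TT)\times(r_2\TT)}\equiv 0$, which identifies $\Jg$. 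Then $\Cc_0(\CC^2)/\Jg\simeq\Cc((r_1\TT)\times(r_2\TT))$ and Example~\ref{Gr80Th.4.1_ex1} delivers the key identification
$$(\Cc_0(\CC^2)/\Jg)\rtimes_{\alpha^\Jg}A\ \simeq\ \Cc((r_1\TT)\times(r_2\TT))\rtimes_{\alpha_\theta}\RR\ \simeq\ \A_\theta\otimes\Kc(L^2(\TT)).$$

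To finish I would invoke Lemma~\ref{param0}\eqref{param0_item2}: since $\theta\notin\QQ$, the irrational rotation algebra $\A_\theta$ is simple and hence so is $\A_\theta\otimes\Kc(L^2(\TT))$; the factored representation is nonzero, as it inherits irreducibility from Lemma~\ref{param1}\eqref{param1_item1}. Consequently the a priori inclusion $\Jg\rtimes_\alpha A\subseteq\Phi(x,\tau)$ supplied by Lemma~\ref{param0}\eqref{param0_item1} is in fact an equality, and therefore $C^*(G_\theta)/\Phi(x,\tau)\simeq\A_\theta\otimes\Kc(L^2(\TT))$, as required. The crux of the entire argument is this last simplicity step, which is precisely what lets one upgrade the inclusion to equality in the absence of a closed-orbit hypothesis.
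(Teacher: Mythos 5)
Your proposal is correct and follows essentially the same route as the paper: Remark~\ref{param2.7} for $\Gamma_3$, Remark~\ref{param2.6} for the closed circle orbits in $\Gamma_2$, and for $\Gamma_1$ the identification of $\Ker\widetilde{M}^\alpha_x$ with the ideal of functions vanishing on the dense-orbit torus, the exact sequence \eqref{exact} together with Example~\ref{Gr80Th.4.1_ex1}, and the simplicity of $\A_\theta\otimes\Kc(L^2(\TT))$ via Lemma~\ref{param0}\eqref{param0_item2} to upgrade the inclusion $\Jg\rtimes_\alpha A\subseteq\Phi(x,\tau)$ to an equality. Your correct reading of item (ii) as concerning $\Gamma_2$ with quotient $\Kc(L^2(\TT))$ matches what the paper's proof actually establishes.
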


\begin{proof}
\eqref{primquot_item1} 
If $[(x,\tau)]\in\Gamma_1$ then one has $x\in(\CC^\times)^2$ and $A_x=\{0\}$, 
hence $\Ind_{A_x}^A(\tau\vert_{A_x})$ is the regular representation $\lambda\colon \RR\to\Bc(L^2(\RR))$ and  
$L^2(A,\tau\vert_{A_x})=L^2(\RR)$. 
Moreover, the $*$-representation 
$\widetilde{M}^\alpha_x\colon \Cc_0(X)\to\Bc(L^2(A,\tau\vert_{A_x}))$ 
takes on the form 
\begin{equation}\label{primit_case1_eq1}
\widetilde{M}^\alpha_x\colon \Cc_0(\CC^2)\to\Bc(L^2(\RR)), 
\quad (\widetilde{M}^\alpha_x(\xi)\varphi)(t)=\xi(\ee^{\ie tD_\theta}x)\varphi(t).
\end{equation}
We now write $x=\begin{pmatrix}z_1 \\ z_2 \end{pmatrix}$ 
and we denote $r_j:=\vert z_j\vert\in(0,\infty)$. 
Since $\theta\in\RR\setminus\QQ$, 
the set $\{\ee^{\ie tD_\theta}x\mid t\in\RR\}$ is dense in $r_1\TT\times r_2\TT$, and it then easily follows by \eqref{primit_case1_eq1} that 
$$\Jg_{r_1,r_2}:=\Ker \widetilde{M}^\alpha_x=\{\xi\in\Cc_0(\CC^2)\mid \xi\vert_{r_1\TT\times r_2\TT}=0\}.$$
It is clear that the ideal  $\Jg_{r_1,r_2}$ is invariant to the action of $A=(\RR,+)$ on $\Cc_0(\CC^2)$ via~\eqref{alphatheta}, and we will now show that 
\begin{equation}\label{primit_case1_eq2}
\Jg_{r_1,r_2}\rtimes_\alpha \RR=\Phi(x,\tau).
\end{equation}
In fact, one has a natural $A$-equivariant $*$-isomorphism 
$\Jg_{r_1,r_2}\simeq\CC_0(\CC^2\setminus(r_1\TT\times r_2\TT))$. 
As a special case of~\eqref{exact}, we then obtain the short exact sequence 
\begin{equation}\label{primit_case1_eq3}
0\to\Jg_{r_1,r_2}\rtimes \RR\hookrightarrow \Cc_0(\CC^2)\rtimes_\alpha \RR
\to\Cc(r_1\TT\times r_2\TT)\rtimes_\alpha \RR\to 0.
\end{equation}
Here $\Cc(r_1\TT\times r_2\TT)\rtimes_\alpha \RR\simeq \A_\theta\otimes\Kc(L^2(\TT))$ by  \eqref{Gr80Th.4.1_ex1_eq1}, 
and on the other hand $\A_\theta$ is simple if $\theta\in\RR\setminus\QQ$. 
(See for instance \cite[Th. VI.1.4]{Da96}.) 
Therefore  $\Cc(r_1\TT\times r_2\TT)\rtimes_\alpha \RR$ is a simple $C^*$-algebra and then, by \eqref{primit_case1_eq3} along with Lemma~\ref{param0}\eqref{param0_item2} we obtain 
$\Jg_{r_1,r_2}\rtimes \RR=\Ker(\widetilde{M}^\alpha_x\rtimes\Ind_{A_x}^A(\tau\vert_{A_x}))$, 
that is, \eqref{primit_case1_eq2} holds true. 

Moreover, 
using \eqref{Gr80Th.4.1_ex1_eq1} and \eqref{primit_case1_eq3} again, 
we obtain the short exact sequence 
\begin{equation}\label{primit_case1_eq4}
0\to\Phi(x,\tau)\hookrightarrow C^*(G_\theta)
\to \A_\theta\otimes\Kc(L^2(\TT))\to 0
\end{equation}
which shows that $C^*(G_\theta)/\Phi(x,\tau)\simeq \A_\theta\otimes \Kc(L^2(\TT))$, as claimed. 

\eqref{primquot_item2} 
Write $x=\begin{pmatrix}z_1 \\ z_2 \end{pmatrix}$. 
Since $[(x,\tau)]\in\Gamma_2$, we have either $z_1=0\ne z_2$ or $z_1\ne0=z_2$. 
We then obtain by Remark~\ref{param2.6} $*$-isomorphisms 
$C^*(G_\theta)/\Phi(x,\tau)\simeq \Kc(L^2(\RR/\ZZ))$ 
or $C^*(G_\theta)/\Phi(x,\tau)\simeq \Kc(L^2(\RR/\theta\ZZ))$, respectively. 

\eqref{primquot_item3} 
In fact, if $[(x,\tau)]\in\Gamma_3$, then $x=0\in\CC^2$, 
and then Remark~\ref{param2.7} gives a $*$-isomorphism
$C^*(G_\theta)/\Phi(0,\tau)\simeq \CC$
for every $\tau\in\RR$, as claimed.  
\end{proof}

\begin{remark}
	\label{quotients}
\normalfont
It follows by Proposition~\ref{primquot} that 
	if $\theta\in\RR\setminus\QQ$, then every primitive quotient of $C^*(G_\theta)$ is $*$-isomorphic either to $\A_\theta\otimes\Kc(L^2(\TT))$, or to $\Kc(L^2(\TT))$, or to~$\CC$. 
\end{remark}

\subsection*{Acknowledgements} 
We wish to thank Professor \c Serban Str\u atil\u a for encouragement and to Professor Douglas Somerset for several interesting remarks on an earlier version of this paper. 

The research of the second-named author was supported by
a grant of the Ministry of Research, Innovation and Digitization, CNCS/CCCDI –
UEFISCDI, project number PN-III-P4-ID-PCE-2020-0878, within PNCDI III.

\end{document}